\setlist[enumerate]{topsep=0pt,itemsep=-1ex,partopsep=1ex,parsep=1ex}
\newtheorem{statement}{}[section]
\newtheorem{theorem}[statement]{Theorem}
\newtheorem{proposition}[statement]{Proposition}
\newtheorem{definition}[statement]{Definition}
\newtheorem{corollary}[statement]{Corollary}
\newtheorem{remark}[statement]{Remark}
\newcommand{\subjclass}[2][1991]{%
  \let\@oldtitle\@title%
  \gdef\@title{\@oldtitle\footnotetext{#1 \textbf{Mathematics subject classification:} #2}}%
}
\newcommand{\keywords}[1]{%
  \let\@@oldtitle\@title%
  \gdef\@title{\@@oldtitle\footnotetext{\textbf{Key words and phrases:} #1}}%
}
\def\cantor{\mathfrak{C}}
\def\orb{{\rm Orb}}
\def\dhc{d\text{-}\mathcal{HC}}
\def\dret{d\text{-}\ret}
\def\ret{\mathcal{N}}
\def\id{\text{Id}}
\newcommand{\free}[1]{\mathcal{F}(#1)} %Chr: norm with two bars
\def\lip{ \text{Lip}}
\def\NN{\Bbb N}
\def\IN{\hbox{{\rm I}\kern-.13em{\rm N}}}
\def\RR{\mathbb{R}}
\def\IR{\hbox{{\rm I}\kern-.13em{\rm R}}}
\def\sp{\hbox{{\rm span}}}
\def\csp{\overline{\sp}}
\def\aa{\alpha}
\def\dd{\delta}
\def\eor{\hfill{\circledR}}
\def\la{\langle}
\def\ra{\rangle}
\def\ol{\overline}
\title{On disjoint dynamical properties and Lipschitz-free spaces}
\subjclass[\textbf{2020}]{47A16, % Cyclic vectors, hypercyclic and chaotic operators
47B37, % Linear operators on special spaces (weighted shifts, operators on sequence spaces, etc.)
37B02, % Dynamics in general topological spaces
46B20, % Geometry and structure of normed linear spaces 
26A18, % Iteration of real functions in one variable
}
\keywords{Hypercyclicity, Disjoint Hypercyclicity, Linear Dynamics, Lipschitz-free, metric spaces.}
\author{Ch. Cobollo\footnote{Christian Cobollo. %Instituto Universitario de Matem\'atica Pura y Aplicada. Universitat Polit\`ecnica de Val\`encia (Spain).
Email: chcogo@upv.es. Corresponding author.
%Supported in part by Generalitat Valenciana (through Project PROMETEU/2021/070 and the predoctoral fellowship CIACIF/2021/378), by MCIN/AEI/10.13039/501100011033 (through Project PID2019-105011GB and PID2022-139449NB-I00), by MCIN/AEI/10.13039/501100011033 and “ERDF A way of making Europe” (Grant PID2021-122126NB-C33) and the Universitat Polit\`ecnica de Val\`encia (Spain).
}
and A. Peris\footnote{Alfred Peris. %Instituto Universitario de Matem\'atica Pura y Aplicada. Universitat Polit\`ecnica de Val\`encia (Spain).
Email: aperis@mat.upv.es.
%Supported in part by Generalitat Valenciana (through Project PROMETEU/2021/070, by MCIN/AEI/10.13039/501100011033 through Projects PID2019-105011GB and PID2022-139449NB-I00 and the Universitat Polit\`ecnica de Val\`encia (Spain).
}
}
\affil{\textit{Institut Universitari de Matem\`atica Pura i Aplicada. Universitat Polit\`ecnica de Val\`encia (Spain).}}
\date{}
\begin{document}
\maketitle
\begin{abstract}
The notion of disjoint $\mathcal{A}$-transitivity for a Furstenberg family $\mathcal{A}$ is introduced with the aim to generalize properties derived from disjoint hypercyclic operators. We begin a systematic study by showing some of the basic properties, including necessary conditions to inherit the property on the whole space from 
an invariant linearly dense set containing the origin. As a consequence, we continue the study of the link between non-linear and linear dynamics through Lipschitz-free spaces by presenting some necessary conditions to obtain disjoint $\mathcal{A}$-transitivity for families of Lipschitz-free operators on $\free{M}$ expressed in terms of conditions in the underlying metric space $M$.
\end{abstract}

\section{Introduction and preliminaries}

Following the concept of disjoint dynamical system proposed by Harry Furstenberg in its seminal paper \cite{Furst67}, J. B\`es and the second author introduced the notion of disjoint hypercyclicity \cite{BesPeris07} (see also \cite{Ber-Gon07}). The study of this concept was widely developed in different directions, including recent works like \cite{BMPS12,SanShk14,Bay23,MaMePu22}, to name a few. The objective of this document is double. First, we introduce the notion of being disjoint $\mathcal{A}$-transitive for a given Furstenberg family $\mathcal{A}$.  Roughly speaking, the rôle of the family $\mathcal{A}$ is to specify the ``frequency'' of travelling between open sets for the dynamics of operators satisfying the given property. This allows us to obtain several well-known properties stronger than disjoint transitivity (like disjoint weakly mixing, disjoint mixing, etc.) as particular cases. 

On the other hand, there is a recent line of research devoted to study whether the functor that relates the Lipschitz map $f$ on a given metric space $M$ to its corresponding linearization operator $T_f$ on the Lipschitz-free space $\free{M}$ (see Theorem \ref{thm:linearization}) carries information about a given property. For instance, compactness on Lipschitz-free operators $T_f$ has been studied in \cite{CaJi16} 
 and \cite{JiSeVill14}, the injectivity of $T_f$ in \cite{GarPetPro23}, and the spectrum of weighted Lipschitz-free operators in \cite{ACP2023}. Concerning dynamical properties,  the first reference on this topic was  \cite{MuPe2015}, where M. Murillo-Arcila and the second author proved that being weakly mixing, weakly mixing and chaotic, or mixing, are properties that are inherited from a Lipschitz map $f$ on $M$ to its corresponding operator $T_f$ on $\free{M}$. The study was recently extended by A. Abbar, C. Coine and C. Petitjean in \cite{ACP24,ACP2021}. Also, notions related to recurrence and rigidity for $T_f$ operators were studied in the very recent paper \cite{Ta-Gar24}, where it is proved that no operator $T_f$ can be wild.
 
 In the present work, we are able to tackle the inheritance of disjoint dynamical properties from a given tuple of maps $f_1,\dots,f_N$ on a metric space $M$ to the associated tuple of operators $T_{f_1},\dots ,T_{f_N}$ on  $\free{M}$, and present some necessary conditions to find disjoint $\mathcal{A}$-transitivity for Lipschitz-free operators. Actually, most of the results are stated in a more general framework, namely extending the dynamics of the operators on a linearly dense subset $Z$ containing zero of a general topological vector space $X$, to the whole space $X$. This is not artificially general since, for instance, the logistic map on the unit interval naturally extends to an operator on a non-metrizable topological vector space \cite{MuPe2015}. Also, the interest on the dynamics of operators on non-metrizable topological vector spaces has increased in recent years \cite{Bo00,BoDo12,BoFrPeWe2005,BoKalPe2021,BCarVFav20,DoKa18,GEPe10,Peris18,Shkarin2012}. 

 The document is organized as follows: In Section \ref{sec:disjoint-A-trans}, we introduce disjoint $\mathcal{A}$-transitivity for a Furstenberg family $\mathcal{A}$ and for a given tuple of continuous maps $f_1,\dots,f_N$ on a topological space $Z$. In the same spirit of \cite{MuPe2015}, we provide conditions to inherit disjoint $\mathcal{A}$-transitivity from a tuple of operators restricted to an invariant linearly dense sets containing the origin to the whole space (\ref{thm:d-a-transitive-linear}). Section \ref{sec:disjoint weakly mixing} is fully devoted to the study of the disjoint weakly mixing property. We illustrate the differences between this property and the weakly mixing property for a single operator. In particular, we observe that commuting with a weakly mixing map already provides strong enough disjoint $\mathcal{A}$-transitivity (Theorem \ref{thm:comm-filter} and Proposition \ref{prop:comm-trans}). In Section \ref{sec:free-prel}, we give a brief summary and working tools on Lipschtiz-free spaces, and the final Section \ref{sec:disjoint-free} contains the specific results regarding the $\mathcal{A}$-transitivity for a finite collection of Lipschitz-free operators in $\free{M}$ (Corollary \ref{cor:lip-d-hyp} and Theorem \ref{thm:lip-d-hcc}), illustrated with some examples developed through Subsections \ref{subsec:shift-cantor} and \ref{subsec:anti-sym-tent-map}.

%%%%%%%%%%%%%%%%%%
Through this document, unless another thing is specified, $M$ will denote a metric space, $Z$ a Hausdorff topological space,  $X$ a (Hausdorff) topological vector space and $T\colon X\to X$ a 
continuous linear operator (in short, $T\in \mathcal{L}(X)$). Given $N\in \NN$, we will write $Z^N$ (resp. $X^N$) to denote the product space 
$Z\times Z \times ...\times Z$ (resp. $X\oplus X \oplus...\oplus X$) endowed with its natural product 
topology. Given $f_1,...,f_N$ (resp. $T_1,...,T_N$) an $N$-tuple of continuous maps (resp. operators) 
acting on $Z$ (resp. $X$), we will use $f:=f_1\times...\times f_N$ (resp. $T:=T_1\times ...\times T_N$) 
to denote the product map (resp. operator) acting on $Z^N$ (resp. $X^N$).

Given $x\in Z$, we write $x^N=(x,x,...,x)\in Z^N$. Indeed, if we take $A\subset Z$ we can introduce its diagonal set in $Z^N$ as

\begin{equation*}
    \Delta^N(A) :=\{x^N \in Z^N : x\in A\}.
\end{equation*}

We will use the analogous notation in the context of topological vector spaces $X$. We refer to  \cite{BaMa09} and \cite{GEPe10} for standard notation and preliminaries in Linear Dynamics.

\section{Disjoint $\mathcal{A}$-transitiveness}\label{sec:disjoint-A-trans}

In this section, we will introduce the main property of the document and its elementary properties. First, allow us to start by recalling some elementary definitions.

\begin{definition}[\cite{BesPeris07}]\label{def:d-hyper-f}
Let $f_1,...,f_N$ be an $N$-tuple of continuous maps acting on topological space $Z$. By setting $f:= f_1 \times f_2 \times ...\times f_N$, we say that the tuple $f_1,...,f_N$ is \textbf{disjoint  hypercyclic} if there exists $x\in Z$ such that:

\begin{equation*}
    \ol{\orb(x^N, f)}= Z^N.
\end{equation*}

We call such a $x\in Z$ a disjoint hypercyclic point associated to the maps $f_1,...,f_N$ (or just $x\in \dhc(f)$). 

%\textcolor{purple}{Chr: We might also define ``disjoint $\mathcal{A}$-hypercyclicity'' in a similar fashion than the properties below using upper and lower densities.}

\end{definition}

\begin{definition}[\cite{BesPeris07}]
Let $f_1,...,f_N$ be an $N$-tuple of continuous maps defined in $Z$, and set $f:= f_1 \times f_2 \times ...\times f_N$. It is said that $f_1,...,f_N$ is \textbf{disjoint transitive} if for every non-empty open subsets $U_0, U_1, ..., U_N \subset Z$, setting ${U:=U_1 \times...\times U_N\subset Z^N}$, the \textit{disjoint return set}
 
 \[\dret_f(U_0, U):=\{ m \in \NN : U_0\cap \bigcap_{i=1}^N f_{i}^{-m}(U_i)\neq \emptyset\}\]

is non-empty.
\end{definition}

Notice that an alternative way of thinking about disjoint return sets is as
\begin{equation*}\begin{split}
\dret_f(U_0, U) = \{m\in \NN : \text{ exist } z_m\in U_0 \text{ s.t. } f_i^{m}z_m\in U_i \text{ for } i=1,...,N \}
\end{split}
\end{equation*}

In some literature, the term \textit{diagonally} transitive is used instead of \textit{disjoint}. Through the usage of diagonal sets introduced above, we have the relation
\begin{equation*}
\begin{split}
    \dret_f(U_0, U) &= \{m\in \NN :  f^{-m}(U)\cap U_0^N  \cap \Delta^N(M) \neq \emptyset\}\\
    &= \{m\in \NN :  f^{-m}(U)\cap  \Delta^N(U_0) \neq \emptyset\}\\
    &= N_{f^{-1}}(U,\Delta^N(U_0)), 
    %\text{\ \textcolor{red}{ Be careful! This set is a usual return set!}}
\end{split}
\end{equation*}

where $N_g(U,V):=\{m\in \NN : g(U)\cap V \neq \emptyset\}$ denotes a usual return set for a map $g$ and open sets $U$ and $V$.

For instance, thanks to the last equality, the term ``diagonally'' becomes almost self-explanatory.

\begin{remark}
\rm
At this point, it may be worth pointing out the matter of the order when writing this notation. We recall that the disjoint dynamical properties are introduced as a property for a finite family of maps, so the order in which we put the maps is irrelevant to the property. Certainly, for some arguments on the product space $Z^N$, or for the use the notation $\dret_f(U_0, U)$, it is useful to set up a product map of the form $f:= f_1 \times f_2 \times ...\times f_N$. Nevertheless, the order does not matter as long as we make the same rearrangement in the order of the open sets $U_1,...,U_N$ when we write the sets. So, by taking $p:\{1,...,N\}\to \{1,...,N\} $ any permutation of the first $N$ natural numbers, and $g_p:=f_{p(1)} \times f_{p(2)} \times ...\times f_{p(N)}$ we have 
% \begin{equation*}
%     \dret_{f_1 \times f_2 \times ...\times f_N}(U_0,U_1\times U_2\times...\times U_N)=   \dret_{f_{p(1)} \times f_{p(2)} \times ...\times f_{p(N)}}(U_0,U_{p(1)}\times U_{p(2)}\times...\times U_{p(N)})
% \end{equation*}
\[\dret_{f}(U_0,U_1\times U_2\times...\times U_N)=   \dret_{g_p}(U_0,U_{p(1)}\times U_{p(2)}\times...\times U_{p(N)})\]\eor
\end{remark}

The following elementary result shows that, by just asking that the disjoint return sets are non-empty, they are indeed infinite. Notice that we use the hypothesis of $Z$ having no isolated points for this. 

\begin{proposition}\label{prop:ret-set-inf}
Let $Z$ be a topological space with no isolated points and let $f_1,...,f_N$ be continuous maps on $Z$. Then, every disjoint return set  $\dret_f(U_0, U)$ is non-empty if and only if every disjoint return set $\dret_f(U_0, U)$ is infinite.
\end{proposition}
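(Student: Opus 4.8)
The plan is to prove the nontrivial direction: assuming every disjoint return set is non-empty, show that each one is in fact infinite. (The converse is trivial since infinite sets are non-empty.) So fix non-empty open sets $U_0, U_1, \dots, U_N \subset Z$, write $U = U_1 \times \dots \times U_N$, and suppose for contradiction that $\dret_f(U_0, U)$ is a non-empty \emph{finite} set, say with largest element $m_0$. The idea is to shrink the open sets so as to force any return time to exceed $m_0$, contradicting non-emptiness of the return set for the shrunken sets.

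The key step is the shrinking construction. Since $Z$ has no isolated points, $U_0$ is infinite; more usefully, for the finitely many maps $f_i$ and finitely many exponents $m \le m_0$ we want to find a smaller non-empty open set $V_0 \subseteq U_0$ whose iterates under each $f_i^m$ (for $1 \le m \le m_0$) miss the corresponding $U_i$. First I would pick a point $z \in U_0$; by continuity, for each $i$ and each $m \in \{1, \dots, m_0\}$ the map $f_i^m$ is continuous, so I can try to choose $z$ together with a neighbourhood on which these iterates are controlled. Concretely: because $Z$ has no isolated points, I can select a point $z \in U_0$ and, using that the finite set $\{f_i^m(z) : 1 \le i \le N,\ 1 \le m \le m_0\}$ is finite while $Z$ has no isolated points, arrange (possibly after moving $z$ slightly inside $U_0$) that $z$ is not one of finitely many ``bad'' points; then by continuity there is an open neighbourhood $V_0 \subseteq U_0$ of $z$ such that for every $m \in \{1, \dots, m_0\}$ and every $i$, $f_i^m(V_0)$ avoids some prescribed point, hence one can further ask $f_i^m(V_0) \cap U_i = \emptyset$ only if that point lies outside $\overline{U_i}$. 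To make this airtight I would instead argue directly: pick $z\in U_0$ with $z \notin \bigcup_{i,m} f_i^{-m}(\overline{U_i})$ whenever possible — but this may fail — so the cleaner route is to shrink the \emph{target} sets $U_i$ instead, or to use the following observation.

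A robust way to carry out the shrinking is to use non-isolation on the orbit side. Pick any $m_1 > m_0$. Choose inductively a decreasing sequence of non-empty open sets and points exploiting that $Z$ has no isolated points so that we can always find points in $U_0$ whose first $m_1$ iterates under the product map $f$ (restricted to the diagonal) are generic. Alternatively — and this is the approach I would actually write — I would reduce to the known single-map fact via the identity $\dret_f(U_0,U) = N_{f^{-1}}(U, \Delta^N(U_0))$ established in the excerpt, combined with a standard lemma: if $g$ is continuous on a space with no isolated points and $N_g(A,B)$ is non-empty for all non-empty open $A, B$, then it is infinite. But here $\Delta^N(U_0)$ is not open in $Z^N$, so that reduction is not immediate; hence I expect the main obstacle to be exactly this — handling the diagonal. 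The fix is to prove the statement intrinsically: given the putative maximum $m_0$, use continuity of $f_1, \dots, f_N$ and their iterates up to $m_0$ together with the no-isolated-points hypothesis to produce a non-empty open $V_0 \subseteq U_0$ with $\dret_f(V_0, U) \cap \{0,1,\dots,m_0\} = \emptyset$; since by hypothesis $\dret_f(V_0,U) \neq \emptyset$, it contains some $m > m_0$, and since $V_0 \subseteq U_0$ implies $\dret_f(V_0,U) \subseteq \dret_f(U_0,U)$, we contradict maximality of $m_0$.

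For the construction of $V_0$: let $F = \{0, 1, \dots, m_0\}$. For each $m \in F \setminus\{0\}$, either there exists a point $w_m \in U_0$ with $f_i^m(w_m) \notin U_i$ for some $i$ — in which case continuity gives a neighbourhood of $w_m$ in $U_0$ avoiding $m$ — or every $w \in U_0$ satisfies $f_i^m(w) \in U_i$ for all $i$, i.e., $m \in \dret_f(V,U)$ for every non-empty open $V \subseteq U_0$; but then we would want $m$ to persist, so this case is the real content. To rule it out, note that if $m \in \dret_f(V,U)$ for \emph{all} non-empty open $V\subseteq U_0$, I can shrink $U_i$ to a proper non-empty open subset $U_i'$ (possible as $Z$ has no isolated points, so $U_i$ is infinite and not minimal) with, say, $\overline{U_i'} \subsetneq U_i$; then by hypothesis applied to $U_0, U_1', \dots, U_N'$ the set $\dret_f(U_0, U')$ is non-empty — but this does not directly shrink it. The clean resolution: intersect over a \emph{chain}. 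Take a strictly decreasing sequence $U_0 \supseteq W_0 \supseteq W_1 \supseteq \dots$ and targets, or simply observe that it suffices to prove: for every $k$, $\dret_f(U_0,U)$ has at least $k$ elements. I would prove this by induction on $k$, at each stage using continuity plus no-isolated-points to find a point in $U_0$ realizing a return time strictly larger than the $k$ already found, by first passing to a small enough neighbourhood where the previously-found small return times are excluded. I expect the bookkeeping of this induction — keeping the neighbourhoods nested and non-empty while excluding a growing finite set of ``small'' return times — to be the only delicate point, and it is routine given that $Z$ has no isolated points.
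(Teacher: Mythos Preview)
Your proposal has a genuine gap that you yourself identified but did not resolve. The shrinking strategy---find $V_0\subseteq U_0$ with $\dret_f(V_0,U)\cap\{1,\dots,m_0\}=\emptyset$---can fail outright: if $U_0\subseteq\bigcap_{i=1}^N f_i^{-m}(U_i)$, then \emph{every} non-empty open $V_0\subseteq U_0$ has $m\in\dret_f(V_0,U)$, so no amount of shrinking $U_0$ will exclude $m$. You noticed this (``this case is the real content''), tried shrinking the targets $U_i$ instead, saw that this does not help either, and then in the final paragraph fell back to the same failed shrinking idea dressed up as an induction. The bookkeeping you call ``routine'' is exactly the step that does not go through.

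The paper's argument avoids this obstruction by a different mechanism: rather than \emph{excluding} small return times, it \emph{composes} them to manufacture a strictly larger one. Given $m\in\dret_f(U_0,U)$, set $V:=U_0\cap\bigcap_i f_i^{-m}(U_i)$, which is non-empty open. Using Hausdorffness and the absence of isolated points, pick pairwise disjoint non-empty open sets $W_1,\dots,W_N\subset V$. By the standing hypothesis, $\dret_f(V,W_1\times\cdots\times W_N)$ is non-empty, yielding $z\in V$ and $n$ with $f_i^n z\in W_i$ for each $i$; the disjointness of the $W_i$ forces $n>0$. Since $f_i^n z\in W_i\subset V\subset f_i^{-m}(U_i)$, one gets $f_i^{m+n}z\in U_i$, and $z\in V\subset U_0$, so $m+n\in\dret_f(U_0,U)$ with $m+n>m$. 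Iterating (or observing unboundedness) gives infiniteness. The key idea you are missing is to shrink \emph{both} the source to $V$ \emph{and} the targets to subsets of $V$, and to use the new return time additively rather than trying to rule out the old ones.
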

\begin{proof}
The ``if'' part is obvious. In order to prove the ``only if'' part, assume that there exists $m\in \NN$ such that the open set

\begin{equation*}
    V:= U_0\cap f_{1}^{-m}(U_1) \cap... \cap f_{N}^{-m}(U_N)
\end{equation*}

is non-empty. Since $Z$ is Hausdorff and has no isolated points, 
we can find disjoint non-empty open sets  $W_1,...,W_N\subset V$. Put $W:=W_1\times...\times W_N$. By hypothesis, we know that there exist $n\in \dret_f(V,W)$, i.e., there exists $z\in V$ such that $f_i^nz \in W_i$ for $i=1,...,N$. Moreover, since the $W_i$'s are disjoint, we know that $n>0$. 

Then, notice that $z\in V \subset U_0$ and also, as $f_i^nz \in W_i\subset V$ for $i=1,...,N$, we deduce that $f_i^{m+n}z= f_i^m(f_i^nz)\in U_i$ for $i=1,...,N$. Thus, we conclude that

\begin{equation*}
    z\in U_0 \cap f_1^{-(m+n)}(U_1)\cap ...\cap f_N^{-(m+n)}(U_N)
\end{equation*}

or, equivalently, $m+n\in \dret_f(U_0,U)$.
\end{proof}

\begin{definition}
 A family $\mathcal{A}\subset \mathcal{P}(\NN)$ is called a Furstenberg family if it satifies:
 \begin{enumerate}
     \item[\rm (i)] Every set $A\in \mathcal{A}$ is infinite;
     \item[\rm (ii)] $\mathcal{A}$ is closed for the inclusion, i.e., if $A\in \mathcal{A}$ and a set $B$ satisfies $A\subset B \subset \NN$ then $B\in \mathcal{A}$.
 \end{enumerate}
 
 If, moreover, it satisfies the additional condition
 
 \begin{itemize}
     \item[\rm (iii)] If $A,B\in \mathcal{A}$ then $A\cap B\in \mathcal{A}$,
 \end{itemize}
 
 then we will say that $\mathcal{A}$ is a filter.
\end{definition}

For any given map $f\colon Z\to Z$, we write  
\begin{equation*}
    \dret_f:=\{A\subset \NN : \dret_f(U_0,U)\subset A \text{ for some open sets } U_0\subset Z, U\subset Z^N\},
\end{equation*}
i.e., the family of supersets of the disjoint return sets.

By Proposition \ref{prop:ret-set-inf}, being disjoint transitive is equivalent to the family $  d\text{-}\ret_f$ being a Furstenberg family. This notion allows us to work in a more general setup:

\begin{definition}\label{def:d-A-trans}
Let $f_1,...,f_N$ be a tuple of continuous maps defined on $Z$, set $f:= f_1 \times f_2 \times ...\times f_N$, and let $\mathcal{A}\subset \mathcal{P(\NN)}$ be a Furstenberg family. We say that $f_1,...,f_N$ is \textbf{disjoint} \textbf{$\boldsymbol{\mathcal{A}}$-transitive} if for every non-empty open subsets $U_0, U_1, ..., U_N \subset Z$,  the disjoint return sets
\begin{equation*}
\dret_f(U_0, U)\in  \mathcal{A}, 
\end{equation*}
where $U:=U_1 \times...\times U_N\subset Z^N$. 
\end{definition}

The following is the main result of this section.

\begin{theorem} \label{thm:d-a-transitive-linear}
Let $X$ be a topological vector space, let $T_1,...,T_N$ be an $N$-tuple of operators in $\mathcal{L}(X)$, and let $0\in Z\subset X$ be a linearly dense set which is invariant for each $T_i$. Assume that the tuple $T_{1|Z},...,T_{N|Z}$ is disjoint $\mathcal{A}$-transitive  in $Z^N$, with $\mathcal{A}$ being a filter. Then $T_1,...,T_N$ are disjoint $\mathcal{A}$-transitive.
\end{theorem}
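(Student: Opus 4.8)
The plan is to verify directly that, for every choice of nonempty open sets $U_0,U_1,\dots,U_N\subseteq X$, the disjoint return set $\dret_f(U_0,U)$ — where $f:=T_1\times\cdots\times T_N$ and $U:=U_1\times\cdots\times U_N$ — belongs to $\mathcal{A}$, by exhibiting a member of $\mathcal{A}$ contained in it and then invoking the upward closure of a Furstenberg family (property (ii)). Since the linear span of $Z$ is dense in $X$, each $U_i$ meets $\sp(Z)$, so I would first fix a point $u_i\in U_i$ written as a finite combination $u_i=\sum_{s=1}^{k_i}\alpha^{(i)}_s z^{(i)}_s$ with $z^{(i)}_s\in Z$, for $i=0,1,\dots,N$.

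The central device is to replace these $N+1$ \emph{a priori} unrelated representations by a single common list of coefficients. Concatenating all the scalars into one list $(\gamma_1,\dots,\gamma_K)$ with $K:=k_0+\cdots+k_N$, split into consecutive blocks $B_0,\dots,B_N$ (block $B_i$ carrying the coefficients of $u_i$), I can rewrite each $u_i$ as $u_i=\sum_{j=1}^{K}\gamma_j a^{(i)}_j$ with $a^{(i)}_j\in Z$, where $a^{(i)}_j$ is the relevant element $z^{(i)}_s$ when $j$ lies in $B_i$ and $a^{(i)}_j=0$ otherwise; here it is precisely the assumption $0\in Z$ that keeps all the $a^{(i)}_j$ inside $Z$. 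By continuity of the map $(x_1,\dots,x_K)\mapsto\sum_j\gamma_j x_j$ on $X^K$, for each $i$ there is a basic open box $\prod_{j=1}^{K}A^{(i)}_j$ around $(a^{(i)}_1,\dots,a^{(i)}_K)$ with $\sum_j\gamma_j A^{(i)}_j\subseteq U_i$; intersecting with $Z$ yields nonempty open subsets $P^{(i)}_j:=A^{(i)}_j\cap Z$ of $Z$ (nonempty because $a^{(i)}_j\in Z$).

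Then I would apply the hypothesis block by block: for each $j\in\{1,\dots,K\}$, the disjoint $\mathcal{A}$-transitivity of $T_{1|Z},\dots,T_{N|Z}$ applied to the open sets $P^{(0)}_j,P^{(1)}_j,\dots,P^{(N)}_j\subseteq Z$ gives $\ret_j:=\dret_{g}\big(P^{(0)}_j,\,P^{(1)}_j\times\cdots\times P^{(N)}_j\big)\in\mathcal{A}$, where $g:=T_{1|Z}\times\cdots\times T_{N|Z}$. Since $\mathcal{A}$ is a filter, $\bigcap_{j=1}^{K}\ret_j\in\mathcal{A}$; this is the only place where the filter hypothesis, as opposed to merely a Furstenberg family, is used. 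Given $m\in\bigcap_j\ret_j$, for each $j$ I pick $x_j\in P^{(0)}_j$ with $T_i^{m}x_j\in P^{(i)}_j$ for all $i=1,\dots,N$, and set $x:=\sum_{j=1}^{K}\gamma_j x_j$. Linearity then gives $x\in\sum_j\gamma_j A^{(0)}_j\subseteq U_0$ and $T_i^{m}x=\sum_j\gamma_j T_i^{m}x_j\in\sum_j\gamma_j A^{(i)}_j\subseteq U_i$ for every $i$, so $m\in\dret_f(U_0,U)$. Hence $\bigcap_{j=1}^{K}\ret_j\subseteq\dret_f(U_0,U)$, and upward closure concludes.

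The only genuine obstacle is the mismatch between the independently chosen $Z$-representations of points of $U_0,\dots,U_N$: one cannot in general impose a common coefficient list without the padding-by-zero trick (hence the role of $0\in Z$), and one cannot then recombine the block-wise return times into a single one without closure of $\mathcal{A}$ under finite intersections. Everything else reduces to the continuity of the vector-space operations and the upward closure of $\mathcal{A}$; note in particular that invariance of $Z$ under each $T_i$ is used only to make the restricted tuple $T_{1|Z},\dots,T_{N|Z}$ well defined.
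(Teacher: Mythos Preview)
Your proposal is correct and follows essentially the same approach as the paper's own proof: both pad the $N+1$ finite $Z$-representations by zeros (using $0\in Z$) so that a common list of coefficients can be used, apply the disjoint $\mathcal{A}$-transitivity hypothesis once for each coefficient to obtain return sets in $\mathcal{A}$, intersect these using the filter property, and then recombine by linearity. Your concatenated single-index notation $(\gamma_1,\dots,\gamma_K)$ with blocks $B_0,\dots,B_N$ is a cleaner repackaging of the paper's double-indexed scheme $\alpha_{i,k}$ with the auxiliary neighbourhoods $U_{i,k}$ and $W_{j,k}^{i}$, but the underlying idea and the structure of the argument are identical.
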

\begin{proof}
Take $V_0,V_1,...,V_N\subset X$ non-empty open sets. Put $Y:= \sp(Z)$ (as $Y$ is dense in $X$, it is enough to prove that $T_{1|Y},...,T_{N|Y}$ are disjoint topologically transitive in $Y^N$). Without loss of generality, we can find $z_0,z_1,..., z_N$, such that
\begin{equation*}
    z_i= \sum_{k=1}^{k_0} \aa_{i,k} x_{i,k} \in V_i
\end{equation*}
 where $x_{i,k}\in Z$, for each $k\in \{1,...,k_0\}$ and  $i\in \{0,1,...,N\}$. Now, for each $x_{i,k}$ we can find an relatively open neighbourhood $U_{i,k}\subset Z$ such that

\begin{equation*}
  z_i\in   \sum_{k=1}^{k_0} \aa_{i,k} U_{i,k} \subset V_i
\end{equation*}

for $i\in \{0,1,...,N\}$.

Now, as $0\in Z$,  for each $i\in \{0,1,...,N\}$ we can find $N \times k_0$ relatively open neighbourhoods of the origin $W_{j,k}^i\subset Z$ (for every $j=\{0,1,...,N\} $ with $j\neq i$ and every $k\in \{1,...,k_0\}$) so small that the following condition holds:

\begin{equation}\label{eq:incl}
    \sum_{k=1}^{k_0} \aa_{i,k} U_{i,k} +
     \sum_{\substack{j=0 \\ j\neq i}}^N\sum_{k=1}^{k_0} \aa_{j,k} W_{j,k}^i \subset V_i.
\end{equation}

Thus, in total, as there are $N+1$ rows --one per each open set $V_i$-- we just found $(N+1)\times N \times k_0$ very small open neighbourhoods of the origin of the form $W_{i,k}^j$. Notice that  we can even take all equal $W_{i,k}^j:=W$, as long as we pick a small enough $W$, which allows us to stress how this notation works in order to make things easier for the reader.
In each of the $N+1$ rows, all the coefficients appear. In the $i$-th row, for each $k\in \{1,...,k_0\}$ the coefficient $\aa_{i,k}$ goes with the open neighbourhood $U_{i,k}$, and for the rest of the coefficients, every $\aa_{j,k}$ where $j\neq i$ goes with the open neighbourhood $W_{j,k}^i$ ($W_{j,k}^i$ is the open neighbourhood that goes with the coefficient $\aa_{j,k}$ in the $i$-th row).

Thus, for each $i\in \{0,1,...,N\}$ and $k\in \{1,..., k_0\}$ we have a coefficient $\aa_{i,k}$. This coefficient has $N+1$ associated open neighbourhoods, the $U_{i,k}$ and the other $W_{i,k}^j$ with $j\in\{0,1,...,N\}$ with $j\neq i$. Moreover, they are placed in this order: $W_{i,k}^0, W_{i,k}^1,...,W_{i,k}^{i-1}, U_i,W_{i,k}^{i+1},...,W_{i,k}^{N} $.

Thus, put the sets
\begin{equation*}
 \ret_{i,k}:=   \dret_{ T_{|M^N} }\big(W_{i,k}^0 , W_{i,k}^1\times...\times W_{i,k}^{i-1}\times U_i\times W_{i,k}^{i+1}\times ...\times W_{i,k}^{N}\big)\in  \mathcal{A}.
\end{equation*}

By linearity of the operators and the inclusions in equation \eqref{eq:incl},
we deduce

\begin{equation*}
    \dret_ {T} \big(V_0, V_1\times  V_2\times ...\times V_N \big) \supset  \bigcap_{\substack{i\in\{0,1,...,N\} \\ %i\neq j\in \{0,1,...,N\} \\ 
    k\in \{1,...,k_0\} }}  \ret_{i,k} \in \mathcal{A},
\end{equation*}

as we wanted to prove.

\end{proof}

%\textcolor{red}{Chr: At first glance, I thought this should work using $M_1,...,M_N\subset X$ $N$ different linearly dense sets in $X$ containing the origin, $M_i$ being $T_i$-invariant, and adding a small modification to the proof (just taking  $W_{j,k}^i\subset M_i$ for $i\in \{1,2,...,N \} $). And even though the technique seems to work well (because everything inside $V_i$ is taken $T_i$-invariant), things get weird when you realize that $T_{|M_1},T_{|M_2},...,T_{|M_N}$ are operators that are not acting in the same space. But maybe it just needs a minor modification to work. Also, this is just a commentary to provide the most general version of such a result, because for the Lipschitz-free case everything works fine, as the linearly dense invariant set is always the same $M$ set, the deltas.}

\begin{remark} \label{rem:mixing}
\rm The result above depends on being disjoint $\mathcal{A}$-transitive where $\mathcal{A}$ is a filter, and thus it is important to know, at least, that some disjoint property fulfils  this condition. Notice that if the maps $f_1,...,f_N$ are disjoint mixing, then every element in  $d$-$\ret_f$ is a cofinite set, so $d$-$\ret_f$ is a filter. \eor
\end{remark}

It is worth noticing that the argument used in Theorem \ref{thm:d-a-transitive-linear} for $N=1$ (i.e., a non-disjoint version of the result for a single operator $T$) provides a generalization of \cite[Theorem 2.3]{MuPe2015} for Furstenberg Families.

%\textcolor{purple}{Maybe an interesting point of view to introduce the properties is using the diagonal concepts as follows:\\ Let $f_1,...,f_N$ a family of $N$ maps defined in $M$. Consider the function $f:=\id \times f_1\times...\times f_N$ a product map in the space $M^{N+1}$. Thus, $f_1,...,f_N$ are disjoint transitive if and only if for any open set $U$ in the product topology of $M^{N+1}$ exist an $m\in \NN$ such that $f^{-m}(U)\cap \Delta^{N+1}(M)\neq \emptyset$.}

\section{Disjoint weakly mixing equivalences}
\label{sec:disjoint weakly mixing}

In this section, we want to discuss whether the context of the disjoint weakly mixing property is analogous to the one with the usual weakly mixing property.

\begin{definition}
Let $f_1,...,f_N$ be a family of $N\geq 2$ continuous maps defined on $Z$, and $r\in \NN$. We will say that they are \textbf{disjoint weakly mixing of order $\boldsymbol r$} if the $N$ product maps $\overbrace{f_1\times ...\times f_1}^{r\text{-times}},..., \overbrace{f_N\times...\times  f_N}^{r\text{-times}}$ acting on the product space $Z^r$ are disjoint transitive. If the maps $f_1,...,f_N$ are disjoint weakly mixing of order 2, then we will simply say that they are \textbf{disjoint weakly mixing}.
\end{definition}

\begin{proposition}
\label{prop:d-wm_intersec}
The following statements are equivalent:
\begin{enumerate}
    \item[\rm (i)] The maps $f_1,...,f_N$ are disjoint weakly mixing;
    \item[\rm (ii)] For every $U_0,V_0\subset Z$ non-empty open sets and $U, V\subset Z^N$ with $U:=U_1\times...\times U_N$ and $V:=V_1\times...\times V_N$ product of $N$ non-empty open sets of $Z$, we have
\begin{equation*}
    \dret_f(U_0,U) \cap \dret_f(V_0,V) \neq \emptyset.
\end{equation*}
\end{enumerate}

Moreover, if $Z$ has no isolated points, then both are also equivalent to:
\begin{enumerate}
\item[\rm (iii)] For every $U_0,V_0\subset Z$ non-empty open sets and $U, V\subset Z^N$ with $U:=U_1\times...\times U_N$ and $V:=V_1\times...\times V_N$ product of $N$ non-empty open sets of $Z$, the set $\dret_f(U_0,U) \cap \dret_f(V_0,V)$ is infinite.
\end{enumerate}
\end{proposition}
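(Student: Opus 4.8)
The plan is to reduce everything to the disjoint return sets of the product maps $g_i:=f_i\times f_i$ acting on $Z^2$, setting $g:=g_1\times\dots\times g_N$ on $(Z^2)^N$, and then to quote the facts already available for these maps, in particular Proposition~\ref{prop:ret-set-inf}. First I would record the key factorization: for rectangular non-empty open sets $\mathcal{U}_0=U_0\times V_0$ and $\mathcal{U}_i=U_i\times V_i$ in $Z^2$ one has $g_i^{-m}(\mathcal{U}_i)=f_i^{-m}(U_i)\times f_i^{-m}(V_i)$, hence
\begin{equation*}
\mathcal{U}_0\cap\bigcap_{i=1}^N g_i^{-m}(\mathcal{U}_i)=\Bigl(U_0\cap\bigcap_{i=1}^N f_i^{-m}(U_i)\Bigr)\times\Bigl(V_0\cap\bigcap_{i=1}^N f_i^{-m}(V_i)\Bigr),
\end{equation*}
which is non-empty exactly when both factors are. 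Writing $U=U_1\times\dots\times U_N$ and $V=V_1\times\dots\times V_N$, this means
\begin{equation*}
\dret_g\bigl(\mathcal{U}_0,\mathcal{U}_1\times\dots\times\mathcal{U}_N\bigr)=\dret_f(U_0,U)\cap\dret_f(V_0,V).
\end{equation*}

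For the equivalence (i) $\Leftrightarrow$ (ii): if $f_1,\dots,f_N$ are disjoint weakly mixing then $g_1,\dots,g_N$ are disjoint transitive, so the left-hand side above is non-empty for every choice of rectangles, which is precisely (ii). Conversely, given arbitrary non-empty open sets $\mathcal{W}_0,\dots,\mathcal{W}_N\subset Z^2$, I would pick rectangular open sets $\mathcal{U}_j\subset\mathcal{W}_j$; by monotonicity of return sets under shrinking of the open sets, $\emptyset\neq\dret_g(\mathcal{U}_0,\mathcal{U}_1\times\dots\times\mathcal{U}_N)\subset\dret_g(\mathcal{W}_0,\mathcal{W}_1\times\dots\times\mathcal{W}_N)$, so $g_1,\dots,g_N$ are disjoint transitive, i.e. (i) holds.

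Finally, for (iii): if $Z$ has no isolated points then neither does $Z^2$, since a singleton $\{(a,b)\}$ is open only if both $\{a\}$ and $\{b\}$ are. Hence Proposition~\ref{prop:ret-set-inf} applies to $g_1,\dots,g_N$ on $Z^2$, giving that these maps are disjoint transitive if and only if every disjoint return set $\dret_g(\mathcal{W}_0,\mathcal{W})$ is infinite; combining this with the reduction to rectangles and the identity above yields the equivalence of (i) and (iii).

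The only verifications involved are the monotonicity of return sets and the transfer of ``no isolated points'' to $Z^2$, both entirely routine, so there is no genuine obstacle here: the substance of the statement is already carried by Proposition~\ref{prop:ret-set-inf}, and the rest is just the bookkeeping of passing through the product $Z^2$.
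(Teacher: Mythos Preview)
Your proposal is correct and follows essentially the same route as the paper's proof: both introduce $g_i:=f_i\times f_i$ on $Z^2$, establish the identity $\dret_g(\mathcal{U}_0,\mathcal{U})=\dret_f(U_0,U)\cap\dret_f(V_0,V)$ for rectangular sets, reduce the general case to rectangles via the product-topology basis, and then invoke Proposition~\ref{prop:ret-set-inf} on $Z^2$ for part (iii). The only cosmetic difference is that you phrase the basis argument explicitly as monotonicity of return sets under shrinking, while the paper simply observes that $\mathcal{B}\cup\{\emptyset\}$ is a base.
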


% FIRST PROOF:
%\begin{proof}
%For every $i=1,...,N$ put $g_i:=f_i\times f_i$ defined in the product space $M^2$ and  $g:=g_1\times...\times g_N$ a product map on the product space $(M^2)^N$. Clearly, $f_1\times f_1,..., f_N\times f_N$ in $M^2$ are d-transitive if and only if for every non-empty open subsets $W_0,W_1,...,W_N\subset M^2$, putting $W:=W_1\times...\times W_N\subset (M^2)^N$, the return sets $\ret_g(W_0,W)$ are non-empty.

 %Put $\mathcal{B}:=\{A\times B : A,B\subset M \text{ non-empty open sets}\}$. As the set $\mathcal{B}\cup \{\emptyset\}$ is a base for the product topology in $M^2$, it is equivalent to check the condition just for the open sets $W_0,W_1,...,W_N\in \mathcal{B}$. Thus, we may consider $W_0=U_0\times V_0$ and $W_i=U_i\times V_i$ with $U_0,V_0,U_i,V_i$ non-empty open sets in $M$ for $i=1,...,N$. Then we arrive to the equation

%\begin{equation*}
%\begin{split}
%    \ret_g(W_0,W)&=\{m\in\NN : W_0 \cap g_1^{-m}(W_1)\cap...\cap g_N^{-m}(W_N) \neq \emptyset\}\\
 %   &= \{m\in\NN : U_0\times V_0 \cap (f_1\times f_1)^{-m}(U_1\times V_1)\cap...\cap (f_N\times f_N)^{-m}(U_N\times V_N) \neq \emptyset\}\\
 %   &= \ret_f(U_0,U) \cap \ret_f(V_0,V) 
%\end{split}
%\end{equation*}
%which finishes the proof.
%\end{proof}

\begin{proof}
 For every $i=1,...,N$ put $g_i:=f_i\times f_i$ defined in the product space $Z^2$ and  $g:=g_1\times...\times g_N$ a product map on the product space $(Z^2)^N$. By definition, $f_1,..., f_N$ in $Z$ are disjoint weakly mixing transitive if and only if $g_1,...,g_N$ in $Z^2$ are disjoint transitive, i.e., for every non-empty open subsets $W_0,W_1,...,W_N\subset Z^2$, putting $W:=W_1\times...\times W_N\subset (Z^2)^N$, the disjoint return sets $\dret_g(W_0,W)$ are non-empty.

Assume $U_0,V_0,U$ and $V$ as in the statement. Now, we may consider ${W_0:=U_0\times V_0}$ and $W_i:=U_i\times V_i$ for every $i=1,...,N$. {Notice that, as the intersection of products is the product of intersections,
\begin{equation*}
    \begin{split}
        W_0 \cap \bigcap_{i=1}^N g_i^{-m}(W_i)&= \bigg(U_0\times V_0\bigg)\cap \bigcap_{i=1}^N \bigg( (f_i\times f_i)^{-m}\big(U_i\times V_i\big)\bigg)\\
        &=\bigg( U_0\times V_0\bigg) \cap \bigcap_{i=1}^N \bigg( f_i^{-m}\big(U_i\big)\times f_i^{-m} \big( V_i\big)\bigg)\\
        &= \bigg(U_0\cap  \bigcap_{i=1}^N  f_i^{-m}\big(U_i\big)  \bigg) \times \bigg(V_0\cap  \bigcap_{i=1}^N  f_i^{-m}\big(V_i\big)  \bigg)
    \end{split}
\end{equation*}
}

Then, an easy computation shows
\small{
\begin{equation*}
\begin{split}
   \dret_f(U_0,U) \cap \dret_f(V_0,V) %=\\ 
  &= \dret_f(U_0,U)\bigcap \{m\in \NN: V_0\cap \bigcap_{i=1}^N f_i^{-m}(V_i) \neq \emptyset\}\\
  %&=\{m\in\NN : U_0\times V_0 \cap (f_1\times f_1)^{-m}(U_1\times V_1)\cap...\cap (f_N\times f_N)^{-m}(U_N\times V_N) \neq \emptyset\}\\
   &= \{m\in\NN : W_0 \cap g_1^{-m}(W_1)\cap...\cap g_N^{-m}(W_N) \neq \emptyset\}\\
    &=  \dret_g(W_0,W).
\end{split}
\end{equation*}

Now, consider $\mathcal{B}:=\{A\times B : A,B\subset Z \text{ non-empty open sets}\}$. Thus, item (ii) is equivalent to $\dret_g(W_0,W_1\times...\times W_N)$ being non-empty for every $W_0,W_1,...,W_N\in \mathcal{B}$. As $\mathcal{B} \cup \{\emptyset\}$ is a base for the product topology in $Z^2$, the equivalence between (I) and (ii) is proven. Moreover, if $Z$ has no isolated points, then the same happens for $Z^2$, so by Proposition \ref{prop:ret-set-inf}, it is equivalent to ask for the sets $\dret_g(W_0,W)$ being non-empty or infinite.
}
\end{proof}

  The same argument would prove that the maps $f_1,...,f_N$ are disjoint weakly mixing of order $r$ if and only if the intersection of $r$ disjoint return sets is non-empty.

\begin{proposition}
\label{prop:d-wm-order-r}
The following statements are equivalent:
\begin{enumerate}
    \item[\rm (i)] The maps $f_1,...,f_N$ are disjoint weakly mixing of order $r$;
    \item[\rm (ii)] For every $U_{0,1},...,U_{0,r}\subset Z$ non-empty open sets and $U_1,...,U_r\subset Z^N$ with $U_j:=U_{j,1}\times...\times U_{j,N}$ for $j=1,...,r$, product of $N$ non-empty open sets of $Z$, we have
\begin{equation*}
   \bigcap_{j=1}^r \dret_f(U_{0,j},U_j) \neq \emptyset.
\end{equation*}
\end{enumerate}

Moreover, if $Z$ has no isolated points, then both are also equivalent to:
\begin{enumerate}
\item[\rm (iii)] For every $U_{0,1},...,U_{0,r}\subset Z$ non-empty open sets and $U_1,...,U_r\subset Z^N$ with $U_j:=U_{j,1}\times...\times U_{j,N}$ for $j=1,...,r$, product of $N$ non-empty open sets of $Z$, the set $ \displaystyle \bigcap_{j=1}^r \dret_f(U_{0,j},U_j)$ is infinite.
\end{enumerate}
\end{proposition}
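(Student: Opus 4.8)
The plan is to mimic the proof of Proposition \ref{prop:d-wm_intersec} essentially verbatim, just replacing the exponent $2$ by $r$ everywhere. For each $i=1,\dots,N$, set $g_i := \overbrace{f_i\times\cdots\times f_i}^{r\text{-times}}$ acting on $Z^r$, and let $g:=g_1\times\cdots\times g_N$ act on $(Z^r)^N$. By definition, $f_1,\dots,f_N$ are disjoint weakly mixing of order $r$ if and only if $g_1,\dots,g_N$ are disjoint transitive on $Z^r$, i.e.\ $\dret_g(W_0,W)\neq\emptyset$ for all non-empty open $W_0,W_1,\dots,W_N\subset Z^r$ with $W:=W_1\times\cdots\times W_N$.

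Next I would take open sets $U_{0,1},\dots,U_{0,r}\subset Z$ and $U_j:=U_{j,1}\times\cdots\times U_{j,N}\subset Z^N$ for $j=1,\dots,r$ as in (ii), and form $W_0:=U_{0,1}\times\cdots\times U_{0,r}\subset Z^r$ and $W_i:=U_{1,i}\times\cdots\times U_{r,i}\subset Z^r$ for $i=1,\dots,N$. The key computation, using that intersection of products is product of intersections, is
\begin{equation*}
W_0\cap\bigcap_{i=1}^N g_i^{-m}(W_i)
= \prod_{j=1}^r\Bigl(U_{0,j}\cap\bigcap_{i=1}^N f_i^{-m}(U_{j,i})\Bigr),
\end{equation*}
which is non-empty precisely when each factor is non-empty, i.e.\ precisely when $m\in\bigcap_{j=1}^r\dret_f(U_{0,j},U_j)$. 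Hence $\dret_g(W_0,W)=\bigcap_{j=1}^r\dret_f(U_{0,j},U_j)$.

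To close the equivalence between (i) and (ii), I would invoke that $\mathcal{B}:=\{A_1\times\cdots\times A_r : A_1,\dots,A_r\subset Z \text{ non-empty open}\}$ together with $\emptyset$ is a base for the product topology on $Z^r$, so checking $\dret_g(W_0,W)\neq\emptyset$ for all basic $W_i$ suffices. For the addendum, note that if $Z$ has no isolated points then neither does $Z^r$, so Proposition \ref{prop:ret-set-inf} applied to $g$ turns ``non-empty'' into ``infinite'', giving (iii). I do not anticipate a genuine obstacle here: the whole point is that the proof of Proposition \ref{prop:d-wm_intersec} never used $r=2$ in an essential way, and indeed the text already flags ``the same argument would prove'' this. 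The only mild care needed is bookkeeping with the double indices $U_{j,i}$ and making sure the ``transpose'' $W_i = U_{1,i}\times\cdots\times U_{r,i}$ is set up so that the product-of-intersections identity lands on the diagonal $m$ in all $r$ return sets simultaneously.
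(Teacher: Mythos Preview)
Your proposal is correct and is exactly the argument the paper intends: it explicitly says ``the same argument would prove'' Proposition~\ref{prop:d-wm-order-r}, and your write-up is precisely that same argument with $2$ replaced by $r$, including the transposed indexing $W_i=U_{1,i}\times\cdots\times U_{r,i}$, the product-of-intersections identity, the basic-open-set reduction via $\mathcal{B}$, and the appeal to Proposition~\ref{prop:ret-set-inf} on $Z^r$ for part~(iii).
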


Given two Furstenberg families $\mathcal{A}_1, \mathcal{A}_2$, we define its product by 

\[\mathcal{A}_1 \cdot \mathcal{A}_2:= \{ A_1\cap A_2: A_1\in \mathcal{A}_1, \  A_2\in \mathcal{A}_2 \}. \]

The equivalence between (i) and (iii) in Proposition \ref{prop:d-wm-order-r}.

\begin{corollary}
    Let $Z$ have no isolated points. Then $f_1,...,f_N$ are disjoint weakly mixing of order $r$ if and only if $\mathcal{A}:= \overbrace{\dret_f \cdot \cdot \cdot \dret_f}^{r \text{-times}} $ is a Furstenberg family. In that case, $f_1,...,f_N$ are disjoint $\mathcal{A}$-transitive.
\end{corollary}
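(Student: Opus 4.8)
The plan is to derive this corollary directly from the equivalence between (i) and (iii) in Proposition~\ref{prop:d-wm-order-r}, together with the definition of the product $\mathcal{A}_1\cdot\mathcal{A}_2$ of Furstenberg families and the definition of $\dret_f$ as the family of supersets of disjoint return sets. The two halves --- the iff statement and the ``in that case'' clause --- should be handled separately, and the second is essentially immediate once the structure of $\mathcal{A}$ is understood.

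For the equivalence, first I would unwind what $\mathcal{A}:=\overbrace{\dret_f\cdots\dret_f}^{r\text{-times}}$ actually is: by the definition of the product of families, a set $A$ belongs to $\mathcal{A}$ if and only if there are open sets $U_{0,j}\subset Z$ and $U_j\subset Z^N$ (of product form) for $j=1,\dots,r$ such that $\bigcap_{j=1}^r \dret_f(U_{0,j},U_j)\subset A$. Note that $\mathcal{A}$ is automatically closed under supersets, since each $\dret_f$ is and the product of upward-closed families is upward-closed. So $\mathcal{A}$ is a Furstenberg family precisely when every such $r$-fold intersection of disjoint return sets is infinite (condition (i) of a Furstenberg family) --- the upward closure being free. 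That last condition is exactly item (iii) of Proposition~\ref{prop:d-wm-order-r} (using that $Z$ has no isolated points), which is equivalent to (i) there, namely that $f_1,\dots,f_N$ are disjoint weakly mixing of order $r$. This gives the iff.

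For the ``in that case'' clause, suppose $f_1,\dots,f_N$ are disjoint weakly mixing of order $r$, so $\mathcal{A}$ is a Furstenberg family. Given any non-empty open sets $V_0,V_1,\dots,V_N\subset Z$ with $V:=V_1\times\cdots\times V_N$, I want $\dret_f(V_0,V)\in\mathcal{A}$. But taking $U_{0,j}:=V_0$ and $U_j:=V$ for every $j=1,\dots,r$, the $r$-fold intersection $\bigcap_{j=1}^r \dret_f(U_{0,j},U_j)$ collapses to $\dret_f(V_0,V)$ itself, which is therefore a member of $\mathcal{A}$ (it is contained in itself). Hence $f_1,\dots,f_N$ are disjoint $\mathcal{A}$-transitive by Definition~\ref{def:d-A-trans}.

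I do not expect a serious obstacle here; the only point requiring a little care is checking that $\mathcal{A}$ satisfies condition (ii) of the definition of a Furstenberg family (closure under inclusion) so that the burden of ``being a Furstenberg family'' reduces purely to the infiniteness of the $r$-fold intersections --- and that is a short observation about products of upward-closed set families. One should also note in passing that these intersections are never empty when the maps are disjoint weakly mixing of order $r$, so the members of $\mathcal{A}$ are genuinely nonempty infinite sets; this is already subsumed in item (iii). The remaining steps are just bookkeeping with the definitions.
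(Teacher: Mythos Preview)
Your proposal is correct and follows essentially the same route as the paper, which simply cites the equivalence between (i) and (iii) in Proposition~\ref{prop:d-wm-order-r} without further comment. You have usefully made explicit the one point the paper glosses over, namely that $\mathcal{A}$ is automatically upward closed (so that being a Furstenberg family reduces to the infiniteness condition), and you have spelled out the trivial verification of the ``in that case'' clause.
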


\begin{remark}
\rm In \cite[Theorem 2.7]{BesPeris07} it is proved for operators $T_1,...,T_N$ that satisfying the Disjoint Hypercyclicity Criterion is equivalent to the fact that, for every $r\in \NN$ the operators $\oplus_{i=1}^r T_1,..., \oplus_{i=1}^r T_N$ are disjoint transitive operators in of $\mathcal{L}(X^r)$, {i.e., $T_1,...,T_N$ are disjoint weakly mixing of any order.} 
 
 This \cite[Theorem 2.7]{BesPeris07} is the disjoint analogous to the classical Hypercyclicity Criterion (see \cite{BePe99}),
  which states that satisfying the Hypercyclicity Criterion is equivalent to having the weakly mixing property. This can also be combined with the classical Furstenberg theorem, which states that a map $f$ is weakly mixing if and only if every product $f\times...\times f$ is disjoint transitive, and in particular, the family of supersets of return sets is a filter.
  
  Thus, one may be tempted to think that the Disjoint Hypercyclicity Criterion may also be equivalent to the disjoint weakly mixing property or the filter condition. However, this is not the case. In \cite{SanShk14}, Sanders and Shkarin construct disjoint weakly mixing operators that do not satisfy the Disjoint Hypercyclicity Criterion. Indeed, they proof that in any infinite-dimensional Banach space a family of operators $T_1,...,T_N$ can be constructed such that $\bigoplus^k T_1,...,\bigoplus^k T_N$ are disjoint transitive but $\bigoplus^{k+1} T_1,...,\bigoplus^{k+1} T_N$ are not disjoint hypercyclic (then, they are not disjoint transitive). Thus, the Sanders and Shkarin construction also shows that a disjoint version of the Furstenberg theorem is not true in general, although it can be recovered under some commutativity assumptions---see Proposition \ref{prop:comm-trans} below.  \eor
\end{remark}

\subsection{Commutativity arguments}\label{subsec:commutativity}

Unlike the classical $N=1$ case, it is well known that there are maps $f_1,...,f_N$ that are disjoint hypercyclic but not disjoint transitive---see \cite[Corollary 3.5]{SanShk14}. However, Cardeccia proved that, for the linear case, and under some commutativity assumptions, both notions are equivalent.

\begin{proposition}[{\cite[Proposition 2.4]{Car24}}]
    Let $T_1,...,T_N$ be operators in an $F$-space such that $T_1$ commutes with $T_i$ for $i=2,...,N$. Then, $T_1,...,T_N$ are disjoint transitive if and only if they are disjoint hypercyclic.
\end{proposition}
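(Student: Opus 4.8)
The statement to prove is Cardeccia's \cite[Proposition 2.4]{Car24}: for operators $T_1,\dots,T_N$ on an $F$-space with $T_1$ commuting with each $T_i$, disjoint hypercyclicity implies disjoint transitivity (the converse being trivial by Baire). The plan is to exploit the commutativity to propagate a single disjoint hypercyclic orbit into a dense $G_\delta$ of disjoint hypercyclic points, which in an $F$-space is equivalent to disjoint transitivity. First I would fix a countable base $(U_k)$ of nonempty open sets in $Z$ (here $Z = X$ with its Polish topology) and recall that $\dhc(f)$ can be written as $\bigcap_k \{x : \exists m,\ T_1^m x \in U_{k_0},\ T_i^m x \in U_{k_i}\ \forall i\}$ ranging over the relevant finite tuples from the base; so it suffices to show each of these sets is dense, since then Baire gives a disjoint hypercyclic point near any prescribed open set, which is exactly what disjoint transitivity asks for when we run the argument through return sets.

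The engine is the commutativity: if $x$ is a disjoint hypercyclic point for $(T_1,\dots,T_N)$, then for any $n$ the point $T_1^n x$ is \emph{also} a disjoint hypercyclic point. Indeed $\orb((T_1^n x)^N, f) = \{(T_1^{m}T_1^n x, T_2^m T_1^n x,\dots,T_N^m T_1^n x) : m\geq 0\}$, and since $T_1$ commutes with every $T_i$ this equals $\{(T_1^n T_1^m x, T_1^n T_2^m x,\dots) : m\geq 0\} = T_1^n\big(\orb(x^N,f)\big)$; as $T_1^n$ is continuous and $\orb(x^N,f)$ is dense in $X^N$, its image has dense closure, so $T_1^n x \in \dhc(f)$. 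Consequently $\{T_1^n x : n\geq 0\} \subset \dhc(f)$. Now $T_1$ itself must be hypercyclic (project the disjoint orbit onto the first coordinate — density of $\orb(x^N,f)$ in $X^N$ forces density of $\orb(x,T_1)$ in $X$), so $\{T_1^n x : n \geq 0\}$ is dense in $X$. Therefore $\dhc(f)$ is dense.

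To convert ``$\dhc(f)$ dense'' into disjoint transitivity, take nonempty open $V_0,V_1,\dots,V_N \subset X$; pick a disjoint hypercyclic point $z \in \dhc(f) \cap V_0$ (possible by density). Since $\orb(z^N,f)$ is dense in $X^N$, there is $m$ with $T_i^m z \in V_i$ for all $i=1,\dots,N$; together with $z \in V_0$ this gives $m \in \dret_f(V_0, V_1\times\dots\times V_N)$, so the disjoint return set is nonempty, i.e.\ the tuple is disjoint transitive. (One should double-check the index bookkeeping: the $V_0$ slot is the ``base'' one and $V_1,\dots,V_N$ are the ``target'' slots, matching Definition of disjoint transitive.)

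\textbf{Main obstacle.} The only genuinely delicate point is that the argument needs $X$ to be a Baire space with a countable base of open sets so that ``dense $\dhc(f)$'' is available and usable — this is where the $F$-space hypothesis enters, and one must be slightly careful that $\dhc(f)$ is a $G_\delta$ (it is, being a countable intersection of open sets $\bigcup_m \{x : T_1^m x \in U_{k_0}, T_i^m x \in U_{k_i}\}$, each open by continuity), though in fact for the implication at hand we do not even need $\dhc(f)$ itself to be dense as a $G_\delta$: the direct argument above only uses that $\dhc(f)$ contains the dense set $\{T_1^n x\}$, so the metrizability is used merely to know $T_1$ hypercyclic $\Rightarrow$ one (hence the) orbit dense, which is automatic. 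So the proof is essentially soft; the care is entirely in the commutativity computation $\orb((T_1^n x)^N, f) = T_1^n\,\orb(x^N,f)$, which is exactly where $T_1 T_i = T_i T_1$ for all $i$ is indispensable.
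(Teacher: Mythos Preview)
The paper does not supply its own proof of this proposition; it is quoted from \cite{Car24} and stated without argument, serving only as context for the paper's subsequent commutativity results (Theorem~\ref{thm:comm-filter} and Proposition~\ref{prop:comm-trans}). So there is no in-paper proof to compare your proposal against.

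That said, your argument is the natural one and is essentially correct. One step should be tightened: when you write ``as $T_1^n$ is continuous and $\orb(x^N,f)$ is dense in $X^N$, its image has dense closure'', continuity alone does not give this --- a continuous map without dense range can send a dense set to a nowhere-dense one. What you need in addition is that $(T_1^n)^N$ has dense range in $X^N$. This holds because $T_1$ is hypercyclic (with hypercyclic vector $x$, by projecting the diagonal orbit onto the first coordinate), hence $T_1$ has dense range, and therefore so do all its powers and their diagonal products; then $\overline{(T_1^n)^N\big(\orb(x^N,f)\big)} \supset (T_1^n)^N(X^N)$, which is dense. With that remark inserted, the proof that $\dhc(f)$ contains the dense set $\{T_1^n x : n\geq 0\}$ is complete, and your final paragraph converting density of $\dhc(f)$ into non-emptiness of every $\dret_f(V_0,V)$ is exactly right. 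A minor side comment: the ``converse by Baire'' (disjoint transitive $\Rightarrow$ disjoint hypercyclic) uses a countable base, i.e.\ separability of $X$; this is harmless here, since the existence of a hypercyclic vector for $T_1$ already forces $X$ to be separable, but it is worth noting where the $F$-space hypothesis is actually used.
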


Also in \cite{SanShk14}, Sanders and Shkarin constructed disjoint weakly mixing operators not satisfying the Disjoint Hypercyclicity Criterion, separating both concepts in the disjoint case. We will show that this construction is impossible under some commutativity assumptions. Notice that, unlike the aforementioned ones, our results do not require linear assumptions on the dynamical system.

\begin{theorem}\label{thm:comm-filter}
     Let $Z$ have no isolated points and let $f_1,...,f_N$ be continuous maps on $Z$ such that there exists a weakly mixing map $g\colon Z\to Z$ such that 
     \[g\circ f_i = f_i \circ g \quad \text{for } i=1,...,N.\]

     Then, $\dret_f$ is a filter.
\end{theorem}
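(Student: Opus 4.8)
The plan is to show that $\dret_f$ is closed under finite intersections, i.e.\ given any two disjoint return sets $\dret_f(U_0,U)$ and $\dret_f(V_0,V)$ we must produce open sets $\tilde U_0, \tilde U$ with $\dret_f(\tilde U_0,\tilde U) \subset \dret_f(U_0,U) \cap \dret_f(V_0,V)$. The natural strategy is to exploit the commuting weakly mixing map $g$ to ``synchronise'' the two return sets, much as one uses a single weakly mixing factor to pass from transitivity to weak mixing in the classical Furstenberg theory. Concretely, since $g$ is weakly mixing it is in particular transitive (indeed topologically weakly mixing, so $N_g(A,B)$ is nonempty for all nonempty open $A,B$), and because $g$ commutes with each $f_i$, we have $g^k \circ f_i^m = f_i^m \circ g^k$, so applying $g^k$ carries a point witnessing membership in one return set to a point witnessing membership in a translated version.

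First I would fix $U_0,U_1,\dots,U_N$ and $V_0,V_1,\dots,V_N$ nonempty open in $Z$. Using that $Z$ has no isolated points and is Hausdorff, and that $g$ is weakly mixing, I would find a single small open set inside each $U_i$ that can be moved by an iterate of $g$ into the corresponding $V_i$: more precisely, apply weak mixing of $g$ to the $2(N{+}1)$ sets to get one $k\in\NN$ and nonempty open $U_i'\subset U_i$ with $g^k(U_i')\subset V_i$ simultaneously for $i=0,1,\dots,N$ (this is exactly the statement that $g\times\cdots\times g$ is transitive on $Z^{N+1}$, which follows from weak mixing of $g$). Then I claim $\dret_f(U_0',U_1'\times\cdots\times U_N') \subset \dret_f(U_0,U)\cap\dret_f(V_0,V)$. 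Indeed if $m$ lies in the left side there is $z\in U_0'$ with $f_i^m z\in U_i'$ for all $i$; since $U_0'\subset U_0$ and $U_i'\subset U_i$ we get $m\in\dret_f(U_0,U)$, and applying $g^k$ and using commutativity, $g^k z\in g^k(U_0')\subset V_0$ and $f_i^m(g^k z) = g^k(f_i^m z)\in g^k(U_i')\subset V_i$, so $m\in\dret_f(V_0,V)$ as well. Since $\dret_f$ is by definition the family of supersets of disjoint return sets, this gives $\dret_f(U_0,U)\cap\dret_f(V_0,V)\in\dret_f$, and together with property (ii) of Furstenberg families (closure under supersets) and the fact that $\dret_f$ is already a Furstenberg family by Proposition~\ref{prop:ret-set-inf} (since $f_1,\dots,f_N$ are disjoint transitive — which follows because $g$ weakly mixing forces enough transitivity; one should check this, or note the hypothesis implicitly gives it), one concludes $\dret_f$ is a filter. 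For general finite intersections one iterates, or directly runs the same argument with $r$ tuples of open sets using weak mixing of $g$ of order $r$ (which holds since a weakly mixing map is weakly mixing of all orders).

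The main obstacle I anticipate is making sure the weakly mixing map $g$ actually gives disjoint transitivity of $f_1,\dots,f_N$ in the first place (so that $\dret_f$ is a genuine Furstenberg family and not just a family of infinite sets vacuously), and being careful that ``weakly mixing'' for $g$ yields the needed simultaneous-synchronisation statement on $Z^{N+1}$ — this is a standard consequence (a weakly mixing map is transitive on all finite powers, which is the Furstenberg characterisation), but it must be invoked cleanly. A secondary technical point is that weak mixing is usually phrased via return sets $N_g$ rather than via forward images $g^k(U_i')\subset V_i$; one gets the latter by instead choosing $k\in N_g(U_i, V_i)$ for all $i$ simultaneously (possible since $g^{\times(N+1)}$ is transitive) and then shrinking: set $U_i' := U_i\cap g^{-k}(V_i)$, which is nonempty open and satisfies $g^k(U_i')\subset V_i$. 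Everything else is routine bookkeeping with the commutation relation $g^k\circ f_i^m = f_i^m\circ g^k$.
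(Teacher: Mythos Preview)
Your approach is essentially identical to the paper's: both invoke Furstenberg's theorem to obtain a single $k$ with $U_i\cap g^{-k}(V_i)\neq\emptyset$ for all $i=0,\dots,N$, set $W_i:=U_i\cap g^{-k}(V_i)$ (your $U_i'$), and use $g^k\circ f_i^{m}=f_i^{m}\circ g^k$ to conclude $\dret_f(W_0,W)\subset\dret_f(U_0,U)\cap\dret_f(V_0,V)$. Your worry about whether $f_1,\dots,f_N$ are disjoint transitive in the first place (so that $\dret_f$ is a Furstenberg family) is well-founded and is not addressed by the paper's proof either --- indeed it does \emph{not} follow from the stated hypotheses (take $f_i=\id$, which commutes with any weakly mixing $g$), so the statement should be read with disjoint transitivity as an implicit assumption, which is how it is used downstream in the paper.
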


\begin{proof}
    For $i=0,...,N$, let $\emptyset \neq U_i, V_i \subset Z$ be arbitrary open sets. We want to show that for each $i=0,...,N$ there exist $W_i\subset Z$ open sets such that

    \[\dret_f(W_0,W)\subset \dret_f(U_0,U) \cap \dret_f(V_0,V)\]

    where, as usual, 
    \begin{align*}
        U&=U_1\times...\times U_N, \\
         V&=V_1\times...\times V_N, \\
          W&=W_1\times...\times W_N.
    \end{align*}

    By Furstenberg Theorem (see for instance \cite[Theorem 1.51]{GEPe10}) it is known that the $N+1$ product $ \overbrace{g\times...\times g}^{(N+1)\text{-times}}\colon Z^{N+1}\to Z^{N+1} $ is transitive. 
Thus, there exist $m\in \NN$ such that $m\in N_g(U_i,V_i)$ for every $i=0,..,N$. 

Then, for $i=0,...,N$, take $W_i:=g^{-m}(V_i) \cap U_i$, which is a non-empty open set in $Z$. Since $W_i\subset U_i$, it is clear that 

\[\dret_f(W_0,W) \subset \dret_f(U_0,U).\]

Now, consider $m'\in \dret_f(W_0,W)$. This means that there exist

\[w\in W_0 \cap \Big( \bigcap_{i=1}^N f_i^{-m'}(W_i) \Big)%\neq \emptyset
.\]

Notice that $v:= g^m(w)\in g^m(g^{-m}(V_0))\subset V_0$.

We only have to show that $f_i^{m'}(v)\in V_i$ for $i=1,..., N$. Indeed, by using the commutativity of $g$ with $f_i$, 

% recall that  $w\in f_i^{-m'}(W_i)$
\[f_i^{m'}(v) = f_i^{m'}(g^m(w))= g^m(f_i^{m'}(w))\in g^m(W_i) \subset V_i.\]

The proof is over.
\end{proof}

This allows us, under an additional commutativity hypothesis, to obtain a disjoint analogous of the Furstenberg Theorem. 
\begin{proposition}\label{prop:comm-trans}
     Let $Z$ have no isolated points and let $f_1,...,f_N$ be disjoint $\mathcal{A}$-transitive maps on $Z$ such that there exists a weakly mixing map $g\colon Z\to Z$ with
     \[g\circ f_i = f_i \circ g \quad \text{for } i=1,...,N.\]
     
     Then, for any $r\in \NN$, the $N$-tuple $ \overbrace{f_1\times...\times f_1}^{r\text{-times}},...,\overbrace{f_N\times...\times f_N}^{r\text{-times}}$ is disjoint $\mathcal{A}$-transitive. In particular, $f_1,...,f_N$ are disjoint weakly mixing of any order.
\end{proposition}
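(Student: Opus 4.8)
The strategy is to reduce the claim about the $r$-fold product tuple directly to the disjoint $\mathcal{A}$-transitivity of the original tuple $f_1,\dots,f_N$, using the commuting weakly mixing map $g$ to ``line up'' finitely many return sets in a single open set, exactly in the spirit of Theorem~\ref{thm:comm-filter}. First I would fix, for each coordinate block $j=1,\dots,r$, non-empty open sets $U_{0,j},U_{1,j},\dots,U_{N,j}\subset Z$ and set $U_j:=U_{1,j}\times\cdots\times U_{N,j}$; the goal is to produce non-empty open sets $W_0,W_1,\dots,W_N\subset Z$ with
\[
\dret_f(W_0,W)\ \subset\ \bigcap_{j=1}^r \dret_{f^{(r)}}(U_{0,j},U_j)\quad\text{suitably interpreted,}
\]
so that since $\dret_f(W_0,W)\in\mathcal{A}$ and $\mathcal{A}$ is closed under supersets, the left side membership forces the intersection into $\mathcal{A}$; and then I would argue that this intersection is itself $\dret$ of the $r$-fold product tuple evaluated at $U_{0,1}\times\cdots\times U_{0,r}$ and $U_1\times\cdots\times U_r$ (reordering coordinates as in the Remark on order), giving disjoint $\mathcal{A}$-transitivity of the $r$-fold product tuple.

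The construction of the $W_i$ generalizes the one coordinate-block at a time idea of Theorem~\ref{thm:comm-filter}. Since $g$ is weakly mixing, by Furstenberg's theorem every finite power $g\times\cdots\times g$ is transitive, hence (as in the proof of Theorem~\ref{thm:comm-filter}, now applied to an $(N+1)r$-fold product) there exist natural numbers $m_2,\dots,m_r$ such that for each fixed $j\in\{2,\dots,r\}$ one has $m_j\in N_g(U_{i,j},U_{i,1})$ simultaneously for all $i=0,\dots,N$. (It is convenient to move every block's target inside block $1$; alternatively pick a common base block.) Then set $W_i:=U_{i,1}\cap\bigcap_{j=2}^r g^{-m_j}(U_{i,j})$ for $i=0,\dots,N$; this is a non-empty open set. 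If $m'\in\dret_f(W_0,W)$, pick $w\in W_0\cap\bigcap_{i=1}^N f_i^{-m'}(W_i)$. For block $1$, already $w$ and its iterates lie in the right sets since $W_i\subset U_{i,1}$; for block $j\ge 2$, put $v_j:=g^{m_j}(w)\in U_{0,j}$, and using $g\circ f_i=f_i\circ g$ we get $f_i^{m'}(v_j)=g^{m_j}(f_i^{m'}(w))\in g^{m_j}(W_i)\subset U_{i,j}$. Hence the same $m'$ witnesses membership in every block, i.e.\ $m'\in\bigcap_{j=1}^r\dret_{f^{(r)}}(U_{0,j},U_j)$, which is the required inclusion.

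Finally I would note the ``In particular'' clause: disjoint weakly mixing of order $r$ is by definition disjoint transitivity of the $r$-fold product tuple, and any Furstenberg family $\mathcal{A}$ consists of non-empty (indeed infinite) sets, so disjoint $\mathcal{A}$-transitivity of the $r$-fold product tuple immediately implies it is disjoint transitive; as $r$ is arbitrary, $f_1,\dots,f_N$ are disjoint weakly mixing of every order. The one point that needs a little care — the main obstacle — is the bookkeeping that the intersection $\bigcap_{j=1}^r\dret_{f^{(r)}}(U_{0,j},U_j)$ really equals (not just is contained in) the single disjoint return set of the $r$-fold product tuple at the product open sets, which is the same ``intersection of products is the product of intersections'' computation already carried out in Proposition~\ref{prop:d-wm_intersec} and the comment following it; once that identification is in place together with the reordering-of-coordinates remark, the proof is complete.
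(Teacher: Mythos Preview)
Your argument is correct in substance, but it is really an \emph{unrolled} version of the paper's two-line proof rather than a genuinely different route. The paper simply invokes Theorem~\ref{thm:comm-filter} to conclude that $\dret_f$ is a filter; since the disjoint return set of the $r$-fold product tuple at basic open sets equals $\bigcap_{j=1}^r \dret_f(U_{0,j},U_j)$ (the computation behind Proposition~\ref{prop:d-wm-order-r}), this finite intersection lies in $\dret_f\subset\mathcal{A}$, and the conclusion follows. What you do instead is redo the construction from Theorem~\ref{thm:comm-filter} directly for $r$ blocks, explicitly producing $W_0,\dots,W_N$ whose single return set sits inside that $r$-fold intersection. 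This buys concreteness but costs concision; the paper's approach cleanly separates the two ingredients (commutativity with a weakly mixing map $\Rightarrow$ filter; filter $\Rightarrow$ finite intersections stay in $\mathcal{A}$).

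Two small points of care in your write-up. First, the direction should read $m_j\in N_g(U_{i,1},U_{i,j})$, consistent with your own definition $W_i\subset g^{-m_j}(U_{i,j})$ and with $v_j=g^{m_j}(w)\in U_{0,j}$; as written the orientation is reversed. Second, the $m_j$'s cannot be obtained from one application of transitivity of an $(N+1)r$-fold product of $g$: that only gives, for each pair $(i,j)$ separately, a point of $U_{i,1}$ landing in $U_{i,j}$, not a common point for all $j$. You need to choose the $m_j$ \emph{iteratively}, shrinking the $W_i$'s at each step (pick $m_2$ so that $W_i^{(2)}:=U_{i,1}\cap g^{-m_2}(U_{i,2})\neq\emptyset$ for all $i$ via the $(N+1)$-fold transitivity of $g$, then pick $m_3$ with $W_i^{(2)}$ in place of $U_{i,1}$, etc.). Once this is said, your notation $\dret_{f^{(r)}}(U_{0,j},U_j)$ should simply be $\dret_f(U_{0,j},U_j)$, and the proof goes through.
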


\begin{proof}
    By the previous result $\dret_f$ is a filter, so as a consequence the $N$-tuple of $r$-products $ \overbrace{f_1\times...\times f_1}^{r\text{-times}},...,\overbrace{f_N\times...\times f_N}^{r\text{-times}}$ is again disjoint $\dret_f$-transitive, and we had that $\dret_f\subset \mathcal{A}$ because the original $N$-tuple $f_1,...,f_N$ was disjoint  $\mathcal{A}$-transitive. 
\end{proof}

Given a map $f\colon Z \to Z$, we call the \textbf{commutator} of $f$ to the set
\[ \mathcal{C}_f:=\{g\colon Z\to Z: g\circ f = f\circ g\}. \]

\begin{corollary}
    Let $f_1,...,f_N$ be continuous maps on $Z$ such that they are disjoint weakly mixing of order $r$ but not disjoint weakly mixing of order $r+1$. Thus, there is no weakly mixing map inside $\cap_{i=1}^N \mathcal{C}_{f_i}$.
\end{corollary}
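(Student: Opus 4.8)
The plan is to argue by contradiction using Proposition \ref{prop:comm-trans}. Suppose, for contradiction, that there exists a weakly mixing map $g \in \bigcap_{i=1}^N \mathcal{C}_{f_i}$, so that $g \circ f_i = f_i \circ g$ for every $i = 1, \dots, N$. Since the maps $f_1, \dots, f_N$ are disjoint weakly mixing of order $r$, they are in particular disjoint weakly mixing of order $1$, i.e.\ disjoint transitive, hence disjoint $\mathcal{A}$-transitive for $\mathcal{A} := \dret_f$, the Furstenberg family of supersets of the disjoint return sets (recall that by Proposition \ref{prop:ret-set-inf} this is indeed a Furstenberg family, using that $Z$ has no isolated points --- which I should add as a standing hypothesis here, or note it follows from $f_1,\dots,f_N$ being disjoint weakly mixing of order $r \ge 1$ on a space that must then be infinite; in fact if $Z$ had an isolated point then no disjoint return set of separated open sets could be non-empty, so the no-isolated-points assumption is automatic and should be stated).

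Next I would apply Proposition \ref{prop:comm-trans} directly: with this $\mathcal{A} = \dret_f$ and this $g$, the proposition tells us that for \emph{any} $r' \in \NN$ the $N$-tuple of $r'$-fold products $\overbrace{f_1 \times \cdots \times f_1}^{r'\text{-times}}, \dots, \overbrace{f_N \times \cdots \times f_N}^{r'\text{-times}}$ is disjoint $\mathcal{A}$-transitive, and in particular $f_1, \dots, f_N$ are disjoint weakly mixing of any order. Taking $r' = r+1$ contradicts the hypothesis that $f_1, \dots, f_N$ are \emph{not} disjoint weakly mixing of order $r+1$. This completes the proof.

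There is essentially no hard step here --- the corollary is a contrapositive repackaging of Proposition \ref{prop:comm-trans}. The only point requiring a little care is the bookkeeping around which Furstenberg family to feed into the proposition: one must observe that disjoint transitivity of $f_1, \dots, f_N$ is the same as disjoint $\dret_f$-transitivity (this is exactly the remark made just after the definition of $\dret_f$ in the excerpt), so that the hypothesis ``disjoint $\mathcal{A}$-transitive'' of Proposition \ref{prop:comm-trans} is met with $\mathcal{A} = \dret_f$. A secondary minor point is confirming the no-isolated-points hypothesis is in force, which I would handle with a one-line observation as above. I would write the argument as follows.

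\medskip

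\begin{proof}
First note that $Z$ has no isolated points: if $z \in Z$ were isolated, then for the open sets $U_0 = \{z\}$ and $U_1 = \cdots = U_N = Z \setminus \{z\}$ (non-empty since $Z$ is not a single point, as it carries disjoint weakly mixing maps of order $r \ge 1$) every disjoint return set $\dret_f(U_0, U)$ would be empty, contradicting disjoint transitivity, which follows from being disjoint weakly mixing of order $r$. Hence Proposition \ref{prop:ret-set-inf} applies and, as observed after the definition of $\dret_f$, the family $\mathcal{A} := \dret_f$ is a Furstenberg family and $f_1, \dots, f_N$ are disjoint $\mathcal{A}$-transitive.

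Suppose, towards a contradiction, that there is a weakly mixing map $g \in \bigcap_{i=1}^N \mathcal{C}_{f_i}$, that is, $g \circ f_i = f_i \circ g$ for $i = 1, \dots, N$. Then the hypotheses of Proposition \ref{prop:comm-trans} are satisfied, and it follows that for every $r' \in \NN$ the $N$-tuple $\overbrace{f_1 \times \cdots \times f_1}^{r'\text{-times}}, \dots, \overbrace{f_N \times \cdots \times f_N}^{r'\text{-times}}$ is disjoint $\mathcal{A}$-transitive; in particular, $f_1, \dots, f_N$ are disjoint weakly mixing of any order. Taking $r' = r + 1$ contradicts the assumption that $f_1, \dots, f_N$ are not disjoint weakly mixing of order $r + 1$. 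Therefore no weakly mixing map lies in $\bigcap_{i=1}^N \mathcal{C}_{f_i}$.
\end{proof}
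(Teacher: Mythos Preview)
Your main argument---the contrapositive of Proposition~\ref{prop:comm-trans}---is exactly the paper's intended route (the paper states the corollary without proof, as an immediate consequence of that proposition). So the core of the proposal is correct and matches the paper.

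One small correction: your side argument ruling out isolated points is not valid as written. With $U_0=\{z\}$ and $U_1=\cdots=U_N=Z\setminus\{z\}$, the disjoint return set is $\{m:f_i^m(z)\neq z\text{ for all }i\}$, which need not be empty. A clean fix, using that the $f_i$ are disjoint weakly mixing of order $r\geq 2$ (hence of order $2$), is to take $U_0=V_0=\{z\}$, $U_1=\{z\}$, $V_1=Z\setminus\{z\}$ (and $U_i=V_i=Z$ for $i\geq 2$): any $m$ in $\dret_f(U_0,U)\cap\dret_f(V_0,V)$ would force $f_1^m(z)=z$ and $f_1^m(z)\neq z$ simultaneously. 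Alternatively---and this is what the paper tacitly does---simply carry the ``$Z$ has no isolated points'' hypothesis from Theorem~\ref{thm:comm-filter} and Proposition~\ref{prop:comm-trans} as a standing assumption for the corollary.
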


\section{Summary on Lipschitz-free spaces} \label{sec:free-prel}

\subsection{Introduction}

 The origin of this class of Banach spaces---which are also referred to through the literature as Arens--Eells spaces or Transportation Cost spaces---can be attributed to classical authors like Kantorovich and Rubinstein \cite{Kant-Rub} through its works in optimal transportation problems. As these spaces turned out to play a relevant role in different fields, their properties were studied and rediscovered many times from different perspectives by authors like Arens, de Leeuw, Eells, or Johnson---see \cite{Ar-Ee}, \cite{Johnson}, and \cite{deLeeuw}. Meanwhile, early versions of what nowadays is known as the universal property were provided by Kadets \cite{Kadets} and Pestov \cite{Pestov}, as Lipschitz-free Banach space may also be presented as the free object between the categories of metric spaces (with the morphisms of Lipschitz maps) and Banach spaces (with the morphisms of linear bounded operators). In 1999, the first edition of the celebrated monograph by Weaver can be considered the starting line of its systematic study as Banach spaces--focusing also on the Banach algebra structure of their duals--, and its recent second edition \cite{Weaver2} has taken its place as the main reference in the topic. The seminal paper \cite{GoKa03} by Godefroy and Kalton in 2003 started the study of Lipschitz-free spaces from the Geometry of Banach spaces point of view, providing some remarkable results using Lipschitz-free spaces as its key tool, like the preservation of the bounded approximation property of Banach spaces by Lipschitz-homeomorphisms, or the linearization of metric embeddings of a separable Banach space $X$ inside a Banach space $Y$. Nowadays, the geometry of Lipschitz-free spaces remains a very active field of research, as it is still unknown in so many aspects.

 % A recent line of research is devoted to showing whether the functor that relates the Lipschitz maps from a given metric space $M$ to its corresponding linearization operator $T_f$ in the Lipschitz-free space $\free{M}$ (see Theorem \ref{thm:linearization} below) carries information about a given property. For instance, compactness on the Lipschitz-free operators $T_f$ has been studied in \cite{JiSeVill14}
% and \cite{CaJi16}, the injectivity of $T_f$ in \cite{GarPetPro23}, and the spectrum of weighted Lipschitz-free operators in \cite{ACP2023}. Concerning dynamical properties, to the best of our knowledge, the first reference in this topic may be considered \cite{MuPe2015}, where M. Murillo and the second author proved that being weakly mixing, weakly mixing and chaotic, or mixing are properties that are inherited from a Lipschitz maps $f$ defined in a metric space $M$ to its corresponding operator $T_f$ in $\free{M}$. The study has been recently extended by A. Abbar, C. Coine and C. Petitjean in \cite{ACP2021}. In the present work, we are able to tackle the inheritance of disjoint dynamical properties from a metric space $M$ to its Lipschitz-free space $\free{M}$.

\subsection{Lipschitz-free working preliminaries} \label{subsec:free-preliminaries}

Although a proper systematic introduction to these spaces may be found in references like \cite{Weaver2}, here we briefly summarise some of its basic properties and tools needed to develop this work.
  
Let $(M,d)$ be a metric space. By choosing a distinguished point $0_M\in M$ (usually called just $0$ when there is no possibility of confusion), we can take $\lip_0(M)$ as the linear space of Lipschitz functions $f\colon M\to \RR$ that vanish at $0$, which is a real Banach space when endowed with the norm of the Lipschitz constant, $\|\cdot\|_{\text{Lip}}$, defined by
\begin{equation*}
  \|f\|_{\text{Lip}}:= \sup_{x\neq y\in M} \dfrac{|f(x)-f(y)|}{d(x,y)}.
\end{equation*}

The choice of the element $0$ is arbitrary since the resulting spaces of taking different distinguished points are isometrically isomorphic as real Banach spaces. It is also worth noticing that, from the Banach space perspective, it is enough to consider complete metric spaces, as the $\lip_0$ (and therefore, the Lipschitz-free space) is isometrically isomorphic whether you take $M$ or its completion. 

The map $\dd: M \to \lip_0(M)^*$ that sends every point $x\in M$ to its evaluation functional $\dd_x$ is an isometry, and it is easy to see that $\dd_x$ and $\dd_y$ with $x \neq y$ are linearly independent. Notice that $\csp \{ \dd (x) : x\in M \}$ is a closed linear subspace of $\lip_0(M)^*$, and indeed it is a predual of $\lip_0(M)$, denoted by $\free{M}$, i.e.,
\[ \free{M}:= \csp\{\dd_x : x\in M\} \ \ ( \subset \lip_0(M)^* ).\]
  
This is the so-called \textit{Lipschitz-free space} of $M$. Roughly speaking, we can think of $\free{M}$ as the Banach space constructed by taking $M$ and providing it with a linear structure in which distinct (non-zero) points in $M$ are now linearly independent, and the endowed norm is the one keeping the original metric structure of $M$. Thus, $\|\dd_x\|= d(x,0)$, or more generally, $\|\dd_x-\dd_y\|=d(x,y)$.
  
As the linear continuous functionals on $\free{M}$ are the elements of $\lip_0(M)$, it is clear how they act. Let $g\in \lip_0(M)$ and $\sum_i^n a_i \dd_{x_i}\in \free{M}$, then
\[\la g, \sum_i^n a_i \dd_{x_i} \ra= \sum_i^n a_i \la g,   \dd_{x_i} \ra= \sum_i^n a_i \dd_{g(x_i)}.\]
  
 Given $x,y \in M$ two distinct points in $M$, its associated \textbf{molecule} (some authors call this \textit{elemental molecule}) is
 \[m_{x,y}:= \frac{\dd_x -\dd_y}{d(x,y)}\in \free{M}.   \]
  
Although $\free{M}$ space may be fully described using \textit{the Dirac deltas}, considering molecules provides some advantages. As, for every $x\in M$, ${\dd_x}={d(x,0)}\cdot m_{x,0}$, $\free{M}= \csp\{m_{x,y}: x,y \in M\}$. The reader will rapidly understand the relevance of these elements on $\free{M}$ since they are not only norm-one elements but, as $\la g, m_{x,y} \ra= \frac{g(x)-g(y)}{d(x,y)}$, they constitute a $1$-norming subset of $\free{M}$ for the norm $\|\cdot\|_{\lip}$. These elements also play a relevant role in the extreme structure of $\free{M}$--for instance, it is still conjectured that every extreme point in a Lipschitz-free space should be a molecule, and it remains as one of the main open problems in the area.

Given $(M,d)$ and $(N,p)$ two metric spaces with distinguished points $0_M$ and $0_N$, we denote by $\lip_0(M,N)$ the space of Lipschitz maps $f\colon M\to N$ such that $f(0_M)=0_N$.
The following property is the quintessence that permits the relation of non-linear dynamics with linear dynamics using Lipschitz-free spaces.

\begin{theorem}\label{thm:linearization}
    Let $(M,d)$ and $(N,p)$ metric spaces with distinguished points $0_M$ and $0_N$ (respectly), and $f\in \lip_0(M,N)$. Then, there exist a unique linear and bounded operator $T_f\colon \free{M}\to \free{N}$ such that $\|T_f\|=\|f\|_\lip$ and the following diagram commutes:

    \begin{center}
\begin{tikzpicture}
  \matrix (m) [matrix of math nodes,row sep=3em,column sep=4em,minimum width=2em]
  {
     M & N \\
     \free{M} & \free{N} \\};
  \path[-stealth]
    (m-1-1) edge node [left] {$\delta_M$} (m-2-1)
            edge node [above] {$f$} (m-1-2)
    (m-2-1.east|-m-2-2) edge [double] node [below] {$T_f$}
            node [above] {} (m-2-2)
    (m-1-2) edge node [right] {$\delta_N$} (m-2-2) ;
           % edge [dashed,-] (m-2-1);
\end{tikzpicture}
\end{center}
\end{theorem}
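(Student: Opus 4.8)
The plan is to construct $T_f$ explicitly on the dense subspace of finitely supported elements and then extend by continuity. First I would define $T_f$ on the linear span of the Dirac deltas by the only possible formula forced by the required commutativity: since $T_f\circ\delta_M=\delta_N\circ f$, we must have $T_f(\delta_x)=\delta_{f(x)}$ for every $x\in M$, and hence $T_f\left(\sum_{i=1}^n a_i\delta_{x_i}\right):=\sum_{i=1}^n a_i\delta_{f(x_i)}$. A preliminary point is that this is well-defined: the $\delta_{x_i}$ for distinct $x_i$ are linearly independent in $\free{M}$ (as recalled in the excerpt), so an element of $\sp\{\delta_x:x\in M\}$ has a well-defined (up to the obvious identifications) coefficient representation, and linearity of the proposed map is immediate.

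The key estimate is boundedness with the sharp constant. I would show $\left\|\sum a_i\delta_{f(x_i)}\right\|_{\free{N}}\le\|f\|_{\lip}\left\|\sum a_i\delta_{x_i}\right\|_{\free{M}}$ by a duality argument: for any $g\in\lip_0(N)$ with $\|g\|_{\lip}\le 1$, the composition $g\circ f$ lies in $\lip_0(M)$ with $\|g\circ f\|_{\lip}\le\|f\|_{\lip}$, and $\langle g,\sum a_i\delta_{f(x_i)}\rangle=\sum a_i g(f(x_i))=\langle g\circ f,\sum a_i\delta_{x_i}\rangle$. Taking the supremum over such $g$ and using that $\{g\in\lip_0(N):\|g\|_{\lip}\le 1\}$ norms $\free{N}$ gives the bound with constant $\|f\|_{\lip}$. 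Since $\sp\{\delta_x:x\in M\}$ is dense in $\free{M}$ by definition, $T_f$ extends uniquely to a bounded linear operator on all of $\free{M}$ with $\|T_f\|\le\|f\|_{\lip}$; the commutativity of the diagram holds on the dense subspace by construction and hence everywhere, and uniqueness of any operator making the diagram commute is forced by density of the $\delta_x$.

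For the reverse norm inequality $\|T_f\|\ge\|f\|_{\lip}$, I would test on molecules: for distinct $x,y\in M$, $\|T_f(m_{x,y})\|=\frac{\|\delta_{f(x)}-\delta_{f(y)}\|_{\free{N}}}{d(x,y)}=\frac{p(f(x),f(y))}{d(x,y)}$, using $\|\delta_u-\delta_v\|=p(u,v)$ in $\free{N}$; taking the supremum over $x\neq y$ recovers $\|f\|_{\lip}$ (when $f(x)=f(y)$ the numerator is $0$, which only helps). Combined with the upper bound this yields $\|T_f\|=\|f\|_{\lip}$.

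I do not expect a serious obstacle here; the only subtlety is bookkeeping around well-definedness on the non-complete span (handled by linear independence of the Diracs) and being careful that the norming/duality pairing is the one identifying $\lip_0(N)$ with $\free{N}^*$, which is exactly the setup recalled just before the statement. The duality computation $\langle g,T_f u\rangle=\langle g\circ f,u\rangle$ is the heart of both the boundedness estimate and, implicitly, the sharp constant, so that is the step I would write out most carefully.
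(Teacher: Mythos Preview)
Your argument is correct and is essentially the standard proof of the universal/linearization property of Lipschitz-free spaces. Note, however, that the paper does not actually prove Theorem~\ref{thm:linearization}: it is stated in Section~\ref{sec:free-prel} as a well-known preliminary fact (with Weaver's monograph as the background reference), so there is no ``paper's own proof'' to compare against. Your write-up would serve perfectly well as the missing justification; the duality identity $\langle g, T_f u\rangle = \langle g\circ f, u\rangle$ that you single out is exactly the computation one finds in the standard references, and the molecule test for the lower bound on $\|T_f\|$ is likewise the usual argument.
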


Thus, every $f\in \lip_0(M,M)$ has an associated operator $T_f\in \mathcal{L}(\free{M})$, and this relation allows us to wonder about what condition is needed to ask on the original dynamical system $(M,f)$ to have a particular dynamical property in $(\free{M},T_f)$.

\section{Inheritance of disjoint dynamical properties in Lipschitz-free spaces} \label{sec:disjoint-free}

For the specific set-up of Lipschitz-free spaces, Theorem \ref{thm:d-a-transitive-linear} translates into the following one that allows the inheritance of a given (strong enough) disjoint dynamical property from Lipschitz maps into its corresponding tuple of Lipschitz-free operators.

\begin{corollary}\label{cor:lip-d-hyp}
Let $f_1,...,f_N\in \lip_0(M,M)$ which are disjoint $\mathcal{A}$-transitive maps in $M$, with $\mathcal{A}$ being a filter. Then its corresponding linear operators $T_{f_1},...,T_{f_N}$ are disjoint $\mathcal{A}$-transitive in $\free{M}$.
\end{corollary}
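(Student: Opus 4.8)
The plan is to obtain the statement as a direct specialization of Theorem \ref{thm:d-a-transitive-linear}. First I would take the topological vector space to be $X := \free{M}$, equipped with its norm topology, and the linearly dense set to be $Z := \delta_M(M) = \{\delta_x : x \in M\}$, endowed with the topology inherited from $\free{M}$. Three hypotheses of Theorem \ref{thm:d-a-transitive-linear} then need to be checked: (a) $0 \in Z$, which holds since $\delta_{0_M} = 0$ in $\free{M}$; (b) $Z$ is linearly dense in $X$, which is nothing but the defining identity $\free{M} = \csp\{\delta_x : x \in M\}$; and (c) $Z$ is invariant under each $T_{f_i}$, which follows from the commuting diagram in Theorem \ref{thm:linearization}: since $f_i \in \lip_0(M,M)$, the operator $T_{f_i}$ maps $\free{M}$ into itself and $T_{f_i}(\delta_x) = \delta_{f_i(x)} \in Z$ for every $x \in M$.

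The heart of the argument is to verify that the restricted tuple $T_{f_1|Z}, \dots, T_{f_N|Z}$ is disjoint $\mathcal{A}$-transitive in $Z^N$. For this I would use that $\delta_M \colon M \to Z$ is an isometry onto its image, hence a homeomorphism, so that the subspace topology of $Z$ inside $\free{M}$ coincides with the metric topology of $M$ carried over by $\delta_M$; in particular every non-empty relatively open subset of $Z$ is of the form $\delta_M(U)$ for some non-empty open $U \subset M$, and likewise in $Z^N$. Moreover, the commuting diagram of Theorem \ref{thm:linearization} states precisely that $\delta_M$ intertwines $f_i$ and $T_{f_i|Z}$, i.e. $\delta_M \circ f_i = T_{f_i|Z} \circ \delta_M$ for each $i$, so the product homeomorphism $\delta_M \times \cdots \times \delta_M \colon M^N \to Z^N$ intertwines $f := f_1 \times \cdots \times f_N$ with $S := T_{f_1|Z} \times \cdots \times T_{f_N|Z}$. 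Since a disjoint return set $\dret(U_0, U)$ is described solely through preimages of open sets under the maps, it is preserved by such a conjugacy: for non-empty open $U_0, U_1, \dots, U_N \subset M$ one obtains $\dret_S\big(\delta_M(U_0), \delta_M(U_1) \times \cdots \times \delta_M(U_N)\big) = \dret_f(U_0, U_1 \times \cdots \times U_N) \in \mathcal{A}$, and by the topological observation above these exhaust the disjoint return sets that must be checked in $Z$. Hence the restricted tuple is disjoint $\mathcal{A}$-transitive in $Z^N$.

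With (a)--(c) verified and the restricted tuple disjoint $\mathcal{A}$-transitive in $Z^N$ --- and $\mathcal{A}$ a filter by hypothesis --- Theorem \ref{thm:d-a-transitive-linear} applies and gives that $T_{f_1}, \dots, T_{f_N}$ are disjoint $\mathcal{A}$-transitive in $\free{M}$, which is the claim. I do not expect any genuine obstacle here: the whole substance is the transfer of the abstract Theorem \ref{thm:d-a-transitive-linear} to the Lipschitz-free picture, and the only mildly delicate point is making sure that the topology $Z$ inherits as a subset of $\free{M}$ agrees with the one $\delta_M$ transports from $M$ --- which is immediate because $\delta_M$ is an isometric embedding --- so that open sets, and with them disjoint return sets, correspond on the two sides.
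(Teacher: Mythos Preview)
Your argument is correct and is precisely the intended one: the paper presents this corollary as an immediate specialization of Theorem~\ref{thm:d-a-transitive-linear} (with $X=\free{M}$ and $Z=\delta_M(M)$), and the details you spell out --- $0\in Z$, linear density, invariance, and the transfer of disjoint $\mathcal{A}$-transitivity via the isometric conjugacy $\delta_M$ --- are exactly the verifications that make that specialization go through.
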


Another result concerning the specific setup of  Lipschitz-free spaces is the criterion below. It is somehow a Lipschitz translation of the original d-Hypercyclicity Criterion in \cite{BesPeris07}, in the same spirit as the Lipschitz Hypercyclicity Criterion presented in \cite{ACP2021}). 

\begin{theorem}(Lipschitz d-Hypercyclicity Criterion)\label{thm:lip-d-hcc}

Let $f_1,f_2,...,f_N\in \lip_0(M,M)$ and let $(n_k)$ be a strictly increasing sequence of positive integers. We say that the functions satisfy the Lipschitz Disjoint Hypercyclicity Criterion with respect to $(n_k)$ provided there exist dense subsets $M_0,M_1,...,M_N\subset M$ and mappings $g_{i,k}:M_i\to M$ ($1\leq i\leq N$, $k\in \NN$) satisfying

\begin{enumerate}
    \item[\em($i$)] $d\big(f_{i}^{n_k}(x_0),0\big) \xrightarrow{k\to \infty} 0$ for every $x_0\in M_0$.
    \item[\em($ii$)] $d\big(g_{i,k} (x_i),0\big) \xrightarrow{k\to \infty} 0$ for every $x_i\in M_i$.
    %\item $f_{i}^{n_k} \circ g_{j,k} - \dd_{j,i}\text{Id}_{M_i} \to 0$ pointwise on $M_i$ ($j\in \{1,...,N\}$).
    \item[\em($iii$)]  $d\big((f_{i}^{n_k} \circ g_{j,k}) (x_i), \Delta_{j,i}x_i \big) \xrightarrow{k\to \infty} 0$ for every $x_i\in M_i$ ($j\in \{1,...,N\}$), where $\Delta_{j,i}x_i$ is $x_i$ if $j=i$, and the distinguished point $0\in M$ otherwise.
\end{enumerate}

If this happens, then the operators $T_{f_1},...,T_{f_N}$ satisfy the Disjoint Hypercyclicity Criterion in \cite{BesPeris07}. Thus, the sequences $\{T_{f_1}^{n_k}\}_{k=1}^\infty,...,\{T_{f_N}^{n_k}\}_{k=1}^\infty$ are disjoint mixing, and in particular, $T_{f_1},...,T_{f_N}$ are disjoint transitive.
\end{theorem}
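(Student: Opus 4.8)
The plan is to unwind both sides through the linearization functor of Theorem~\ref{thm:linearization} and verify the classical Disjoint Hypercyclicity Criterion of \cite{BesPeris07} on the level of the dense set of molecules (equivalently, finitely supported elements) in $\free{M}$. Recall that that criterion asks for dense subsets $X_0,X_1,\dots,X_N\subset \free{M}$ and maps $S_{i,k}\colon X_i\to \free{M}$ such that $T_{f_i}^{n_k}\to 0$ pointwise on $X_0$, $S_{i,k}\to 0$ pointwise on $X_i$, and $T_{f_i}^{n_k}S_{j,k}\to \delta_{i,j}\,\mathrm{Id}$ pointwise on $X_i$. The natural candidates are $X_i:=\sp\{\delta_x-\delta_y : x,y\in M_i\}$ (dense in $\free{M}$ because $M_i$ is dense in $M$ and the molecules span a dense subspace), and $S_{i,k}:=T_{g_{i,k}}$ — here one must be a bit careful because $g_{i,k}$ is only defined on $M_i$, not all of $M$, but since $X_i$ is spanned by $\delta_x-\delta_y$ with $x,y\in M_i$, the formula $S_{i,k}(\delta_x-\delta_y):=\delta_{g_{i,k}(x)}-\delta_{g_{i,k}(y)}$ extends linearly to $X_i$ without needing $g_{i,k}$ elsewhere. (One could alternatively extend each $g_{i,k}$ to a Lipschitz map on all of $M$ via a McShane-type extension, but that is not needed for pointwise convergence on $X_i$.)

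\textbf{Key steps.} First I would record that $T_{f_i}^{n_k}=T_{f_i^{n_k}}$ and that, on a molecule, $T_h(\delta_x-\delta_y)=\delta_{h(x)}-\delta_{h(y)}$ for any $h\in\lip_0(M,M)$, so all computations reduce to tracking points in $M$ and using $\|\delta_u-\delta_v\|=d(u,v)$. Second, for condition~(i): for $x_0\in M_0$, $\|T_{f_i}^{n_k}\delta_{x_0}\|=\|\delta_{f_i^{n_k}(x_0)}\|=d(f_i^{n_k}(x_0),0)\to 0$ by hypothesis~($i$); by linearity this gives $T_{f_i}^{n_k}\to 0$ pointwise on $X_0$ (a spanning set suffices for a bounded sequence of operators, but in fact $\{T_{f_i}^{n_k}\}$ need not be bounded — so I would be slightly more careful and note that it is enough to have pointwise convergence to $0$ on the dense set $X_0$ itself, and the displayed estimate gives exactly that on each generator $\delta_{x_0}$, hence on $X_0$ by linearity). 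Third, condition~(ii) is identical: $\|S_{i,k}(\delta_{x_i}-\delta_{y_i})\|\le d(g_{i,k}(x_i),0)+d(g_{i,k}(y_i),0)\to 0$ by~($ii$). Fourth, condition~(iii): fix $x_i,y_i\in M_i$ and $j$; then
\[
T_{f_i}^{n_k}S_{j,k}(\delta_{x_i}-\delta_{y_i})-\Delta_{j,i}(\delta_{x_i}-\delta_{y_i})
=\bigl(\delta_{(f_i^{n_k}\circ g_{j,k})(x_i)}-\delta_{\Delta_{j,i}x_i}\bigr)-\bigl(\delta_{(f_i^{n_k}\circ g_{j,k})(y_i)}-\delta_{\Delta_{j,i}y_i}\bigr),
\]
whose norm is bounded by $d\bigl((f_i^{n_k}\circ g_{j,k})(x_i),\Delta_{j,i}x_i\bigr)+d\bigl((f_i^{n_k}\circ g_{j,k})(y_i),\Delta_{j,i}y_i\bigr)$, which tends to $0$ by hypothesis~($iii$). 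When $j=i$ this reads $T_{f_i}^{n_k}S_{i,k}\to\mathrm{Id}$ pointwise on $X_i$; when $j\neq i$, $\Delta_{j,i}x_i=0=\Delta_{j,i}y_i$, so it reads $T_{f_i}^{n_k}S_{j,k}\to 0$ pointwise on $X_j\cap$(this generator), which is the required mixed condition. Finally, having verified the Disjoint Hypercyclicity Criterion, I invoke \cite[Theorem 2.7 and the mixing remarks]{BesPeris07} to conclude that $\{T_{f_1}^{n_k}\},\dots,\{T_{f_N}^{n_k}\}$ are disjoint mixing, whence $T_{f_1},\dots,T_{f_N}$ are disjoint transitive.

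\textbf{Main obstacle.} The only genuinely delicate point is the partial definition of $g_{i,k}$ on $M_i$ only: one must make sure the linear extension $S_{i,k}$ on $X_i=\sp\{\delta_x-\delta_y: x,y\in M_i\}$ is well defined. This is immediate once one observes that the differences $\{\delta_x-\delta_y : x,y\in M_i\}$ generate $X_i$ and that the assignment factors through the (linear) map induced by the set map $x\mapsto g_{i,k}(x)$ on $M_i$, i.e.\ it is the restriction to $X_i$ of $T_{\tilde g_{i,k}}$ for any Lipschitz extension $\tilde g_{i,k}$ of $g_{i,k}$ — the existence of such an extension (McShane/Kirszbraun into $\RR$ coordinatewise, or simply working formally on the subspace) guarantees well-definedness, and no Lipschitz bound on it is needed because all we use is pointwise norm convergence on generators. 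A secondary bookkeeping point is the case distinction $j=i$ versus $j\neq i$ in~(iii), which must be matched precisely to the two clauses of the Disjoint Hypercyclicity Criterion; the factor $\Delta_{j,i}$ in hypothesis~($iii$) is designed exactly for this, so it goes through cleanly.
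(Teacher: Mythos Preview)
Your proposal is correct and follows essentially the same route as the paper: pick dense subspaces $X_i\subset\free{M}$ generated by the isometric copies of $M_i$, define $S_{i,k}$ as the linear extension of $\delta_x\mapsto\delta_{g_{i,k}(x)}$, and reduce each of the three conditions of the Disjoint Hypercyclicity Criterion to the corresponding metric hypothesis via $\|\delta_u-\delta_v\|=d(u,v)$.

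The only cosmetic difference is that the paper takes $X_i:=\sp\{\delta_x:x\in M_i\}$ rather than your $\sp\{\delta_x-\delta_y:x,y\in M_i\}$. Since distinct $\delta_x$ are linearly independent, the assignment $\delta_x\mapsto\delta_{g_{i,k}(x)}$ extends linearly to $X_i$ with no well-definedness issue whatsoever, so the whole McShane/Kirszbraun discussion becomes unnecessary. Also, in your verification of~(iii) you apply $S_{j,k}$ to a generator $\delta_{x_i}-\delta_{y_i}$ with $x_i,y_i\in M_i$; strictly speaking the domain of $S_{j,k}$ is $X_j$, so those points should be taken in $M_j$ (the paper's own proof has the same index slippage, harmless since in every application one has $M_0=\cdots=M_N$).
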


\begin{proof}
For $i\in \{0,1,...,N\}$ put $X_i:= \sp\{\dd(M_i)\}$, which are all dense sets in $\free{M}$. As, $T_{f_i^{n_k}}= T_{f_i}^{n_k}$, thanks to condition $(i)$ we know that for $i\in \{1,...,N\}$ $T_{f_i}^{n_k} \xrightarrow{k\to \infty} 0$ pointwise on $X_0$.

Now, put $S_{i,k}:X_i\to \free{M}$ for the linear extension of the map 

\begin{equation*}
    \dd \circ g_{i,k}\circ \dd^{-1}_{|\dd(M_i)}: \dd(M_i) \to \free{M}.
\end{equation*}

By item $(ii)$ we deduce $S_{i,k}\xrightarrow{k\to \infty} 0$ pointwise on $X_i$.

Finally, if $z_i=\sum_{n=1}^{n_0} \aa_n \dd(x_{i,n}) \in X_i$ (for $i\in \{1,...,N\}$) then 

\begin{equation*}
\begin{split}
     \|( T_{f_i}^{n_k} S_{j,k} - \Delta_{j,i} \text{Id}) z_i \| & \leq \sum _{n=1}^{n_0} |\aa_n|\| ( T_{f_i}^{n_k} \circ S_{j,k} - \Delta_{j,i} \text{Id})\dd(x_{i,n})\| \\
     & = \sum _{n=1}^{n_0} |\aa_n|\| \dd( T_{f_i}^{n_k}\circ S_{j,k})(x_{i,n}) - \dd(\Delta_{j,i} x_{i,n})\| \\
     & = \sum _{n=1}^{n_0} |\aa_n| d\big((f_{i}^{n_k} \circ g_{j,k}) (x_{i,n}), \Delta_{j,i}x_{i,n}\big),
\end{split}
\end{equation*}

which tends to $0$ as $k\to \infty$, since by item $(iii)$ we have  
\[{d\big((f_{i}^{n_k} \circ g_{j,k}) (x_{i,n}), \Delta_{j,i}x_{i,n}\big)\xrightarrow{k\to \infty} 0}\]

for every $n\in \{1,...,n_0\}$.

Thus, the operators $T_{f_1},...,T_{f_N}\in \mathcal{L}(\free{M})$ satisfy the Disjoint Hypercyclicity Criterion with respect to the sequence $(n_k)$ (see \cite[Proposition 2.6]{BesPeris07}). In particular, the sequences $\{T_{f_1}^{n_k}\}_{k=1}^\infty,...,\{T_{f_N}^{n_k}\}_{k=1}^\infty$ are disjoint mixing, and thus $T_{f_1},...,T_{f_N}$ are disjoint transitive.
\end{proof}

The rest of the document will be devoted to exploring the definition and properties of some natural examples of Lipschitz-free operators to illustrate an application of the previous results.

\subsection{The family of inherited backward shifts on the Lipschitz-free of the Cantor set}\label{subsec:shift-cantor}

Consider $\cantor$ the usual middle third Cantor set with the usual distance. This set may be written as $\cantor=[0,1]\backslash \big( \bigcup_{n=1}^\infty I_n \big)$, where the sequence of intervals $I_n:=]a_n,b_n[$ is an enumeration of the middle thirds we are taking on each step. In particular, we consider $I_1=]\frac{1}{3},\frac{2}{3}[$, $I_2= ]\frac{1}{9},\frac{2}{9}[$, $I_3=]\frac{7}{9}, \frac{8}{9}[$, etc., i.e., we order the removed intervals following Figure \ref{fig:Cantor-order}.

%We may imagine this construction as a sequence of steps, in which initially we take $[0,1]$ and the $i$-th step is removing all middle thrids of length $\frac{1}{3^i}$, and by putting the resultant set of each step below the previous one, we can enumerate the appearing ``gaps'' with the ``reading order'' (left to right, and up to down).

 \begin{figure}[htb!]
\begin{center}
\includegraphics[width=4in]{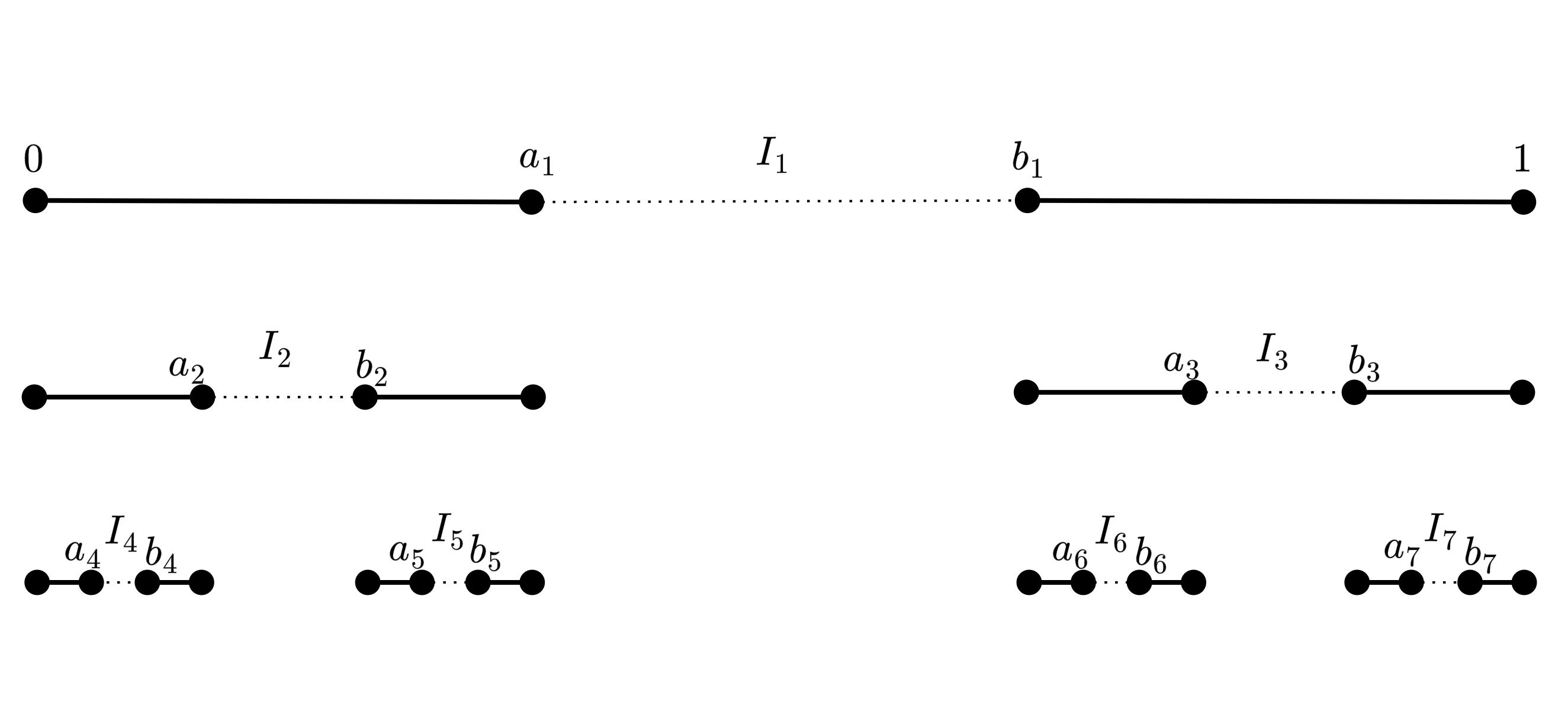}
\caption{Enumeration of the gaps in the ternary Cantor set.}
\label{fig:Cantor-order}
\end{center}
\end{figure}

It would be useful to recall that 
\begin{equation*} \label{eq:Cantor-sequence}
      (d(b_n,a_n))_{n=1}^\infty = \Big(\frac{1}{3},\frac{1}{3^2},\frac{1}{3^2}, \frac{1}{3^3}, \frac{1}{3^3},\frac{1}{3^3},\frac{1}{3^3},... \Big),
\end{equation*}  
    
    i.e., the sequence such that, for $i\geq 1$, the term $\frac{1}{3^i}$ appears $2^{i-1}$ times.

It is well known that every point of the Cantor set has a ternary representation as a sequence of digits $0$ and $2$. Let $t\in \cantor$, therefore $t= \sum \frac{s_n(t)}{3^n}$ where $s_n(t)\in \{0,2\}$. Thus, define the ternary representation of $t\in \cantor$ by $s(t):= (s_1(t), s_2(t),...)$.

Now, consider the (Lipschitz) map $\sigma \colon \cantor \to \cantor$ known as the \textit{backward shift}, defined by the expression 
\[ \sigma(t):= \sum \frac{s_{n+1}(t)}{3^n}. \]
Basically, for a point $t$ with ternary representation $s(t)=(s_1(t),s_2(t), s_3(t),...)$, the image through the backward shift $\sigma(t)$ is the point of the Cantor set with ternary representation $(s_2(t), s_3(t),...)$.

 \begin{figure}[htb!]
\begin{center}
\includegraphics[width=4in]{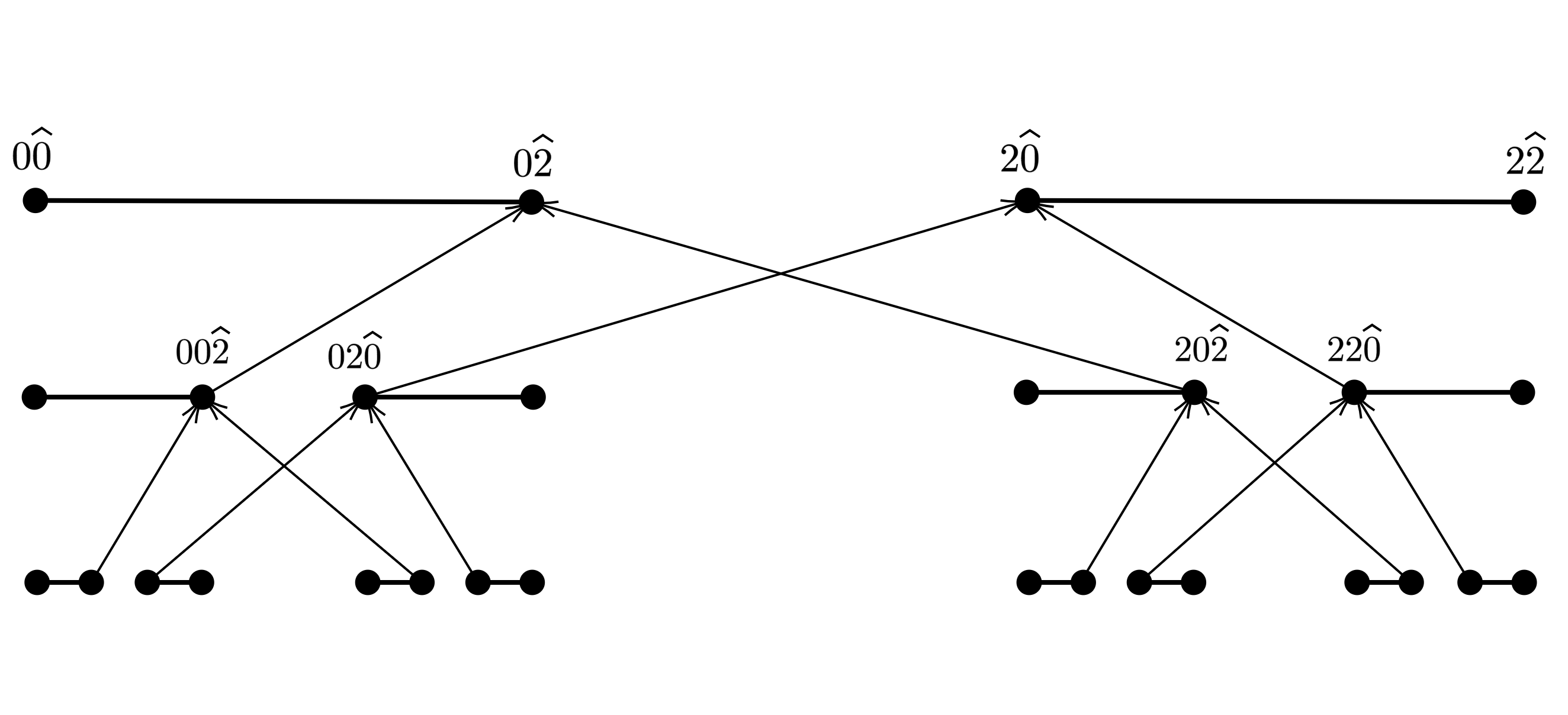}
\caption{Ternary representation of the pairs $a_n$ and $b_n$, and the action of $\sigma$.}
\label{fig:free-repres}
\end{center}
\end{figure}

By the result of Godard---see \cite{Godard}---it is known that $\free{\cantor}\equiv \ell_1$. 
%although it is obtained as a very particular corollary of a much-complicated result above the dual spaces, so the isometry is---as far as I know---not written anywhere in the literature. 
Indeed, the linear operator $\phi \colon \ell_1 \to \free{\cantor}$ defined by $\phi(e_n):= m_{b_n,a_n}$ is a linear onto isometry. In particular, for every $t\in \cantor$, we have 

\[
\dd_t= \sum \{d(b_n,a_n) m_{b_n,a_n}: n \text{ such that } b_n\leq t\} 
\]

and therefore

\[ \phi\Big(\sum \{d(b_n,a_n) e_n: n \text{ such that } b_n \leq t\}\Big)= \dd_t. \]

% \begin{remark}
% \rm     Above we gave the representation of every $\dd_t$ for $t\in \cantor$ as a series of the molecules $m_{b_n,a_n}$. The molecule
% \[m_{1,0}=\dd_1 = \sum_{n=1}^\infty d(b_n,a_n)m_{b_n,a_n}\]
% will play a relevant rôle. \eor
% \end{remark}

Then, the backward shift $\sigma$ defines an operator $T_\sigma \colon \free{\cantor} \to \free{\cantor}$, so we may consider its conjugated operator in $\ell_1$ through the isometry $\phi$, that is, $ S_\sigma \colon \ell_1 \to \ell_1$ as $ S_\sigma:= \phi^{-1} \circ T_\sigma \circ \phi $. Let us study its action on vectors of $\ell_1$.

We denote by $[r]$ the integer part of a given number $r$. Notice that, due to the chosen enumeration on the substracted intervals, the action of the backward shift is
 \[\sigma(b_n)= b_{[\frac{n}{2}]}, \ \sigma(a_n)= a_{[\frac{n}{2}]}, \ \text{ for } n\geq 2,\]
 while $\sigma(\frac{1}{3})=1$ and $\sigma(\frac{2}{3})=0$.

Hence for every $n\geq 2$, 
\begin{align*}
    T_\sigma (m_{b_n,a_n}) &=% T_\sigma ( \dfrac{\dd_{b_n}-\dd_{a_n}}{d(b_n,a_n)})\\&=  
    \dfrac{\dd_{\sigma(b_n)}-\dd_{\sigma(a_n)}}{d(b_n,a_n)} \\
    &=\dfrac{\dd_{b_{[\frac{n}{2}]}}-\dd_{a_{[\frac{n}{2}]}}}{d(b_n,a_n)} \\
    &= \dfrac{d(b_{[\frac{n}{2}]},a_{[\frac{n}{2}]}) }{d(b_n,a_n)} m_{b_{[\frac{n}{2}]},a_{[\frac{n}{2}]}} \\
    &= 3 m_{b_{[\frac{n}{2}]},a_{[\frac{n}{2}]}}
\end{align*}
  and $T_\sigma (m_{b_1,a_1})=T_\sigma (m_{\frac{2}{3},\frac{1}{3}})= - 3 m_{1,0} = - 3 \sum_{n=1}^\infty d(b_n,a_n) m_{b_n,a_n} $. As $0$ and $1$ are fixed points for $\sigma$, it is deduced that $m_{1,0}$ is a fixed point for $T_\sigma$. 

Notice that it is coherent, as it can be checked by a straightforward computation that $S_\sigma (e_1)= -3 \sum_{n=1}^\infty d(b_n,a_n) e_n \in \ell_1$ is a fixed point for $S_\sigma$. Indeed, 
\begin{align*}
    S_\sigma( -3 \sum_{n=1}^\infty d(b_n,a_n) e_n) &= -3\Big( \frac{1}{3}S(e_1) + S( \sum_{n=2}^\infty d(b_n,a_n) e_n) \Big)\\
    &= -3\Big( - \sum_{n=1}^\infty d(b_n,a_n)e_n + \sum_{n=2}^\infty d(b_n,a_n) 3 e_{[\frac{n}{2}]}   \Big)\\
    &= -3\Big( - \sum_{n=1}^\infty d(b_n,a_n)e_n + 2\sum_{n=1}^\infty d(b_n,a_n)  e_n   \Big)\\
    &= -3 \sum_{n=1}^\infty d(b_n,a_n) e_n.
\end{align*}

Then, we conclude the following:

\begin{proposition}
    The operator $S_\sigma \colon \ell_1 \to \ell_1$ is %a generalized backward operator
    such that $S_\sigma (e_n)= 3 e_{[n/2]}$ for $n\geq 2$, and $S_\sigma (e_1)= -3 \sum_{n=1}^\infty d(b_n,a_n) e_n$ (which, moreover, is a fixed point) is conjugated to $T_\sigma$. Alternatively, the action of $S_\sigma$ on $\ell_1$ sequences can be described as the product by the infinite matrix, 
    \begin{equation*}
M_\sigma= 3
 \begin{bmatrix}
  - d_1 & 1 & 1& 0 & 0 & 0&0&\cdots & \\
   -d_2 & 0 & 0& 1 & 1 &0 &0&\cdots & \\
    -d_3 & 0 & 0& 0 & 0 &1 & 1&\cdots & \\
      \vdots &  \vdots & &  &  \ddots &\ddots & & &
 \end{bmatrix}
\end{equation*}
where $\{d_n\}_{n=1}^\infty =\Big(\frac{1}{3},\frac{1}{3^2},\frac{1}{3^2}, \frac{1}{3^3}, \frac{1}{3^3},\frac{1}{3^3},\frac{1}{3^3},... \Big)$ is the sequence in Equation \ref{eq:Cantor-sequence}.
\end{proposition}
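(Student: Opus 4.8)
The plan is to verify directly that the operator $S_\sigma := \phi^{-1}\circ T_\sigma\circ\phi$ acts on the standard basis $\{e_n\}$ of $\ell_1$ exactly as claimed, and then to read off the matrix representation $M_\sigma$. Since $\phi$ is the linear onto isometry determined by $\phi(e_n) = m_{b_n,a_n}$, it suffices to compute $\phi^{-1}(T_\sigma(\phi(e_n))) = \phi^{-1}(T_\sigma(m_{b_n,a_n}))$ for each $n$. The computation of $T_\sigma(m_{b_n,a_n})$ for $n\geq 2$ has already been carried out in the paragraph preceding the statement: using the combinatorics of the enumeration, $\sigma(b_n)=b_{[n/2]}$ and $\sigma(a_n)=a_{[n/2]}$, and since consecutive generations of gap-lengths differ by a factor of $3$, one gets $T_\sigma(m_{b_n,a_n}) = 3\,m_{b_{[n/2]},a_{[n/2]}}$. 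Applying $\phi^{-1}$ yields $S_\sigma(e_n) = 3\,e_{[n/2]}$, which is the stated formula for $n\geq 2$.

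For the case $n=1$, I would recall that $b_1 = \tfrac23$, $a_1 = \tfrac13$, and $\sigma(\tfrac23)=0$, $\sigma(\tfrac13)=1$, so
\begin{equation*}
T_\sigma(m_{b_1,a_1}) = \frac{\dd_0 - \dd_1}{d(b_1,a_1)} = -3\,m_{1,0},
\end{equation*}
using $d(b_1,a_1) = \tfrac13$ and $\dd_0 = 0$. Now I would invoke the identity, already established in the subsection, that $\dd_1 = \sum_{n=1}^\infty d(b_n,a_n)\,m_{b_n,a_n}$ (this is the special case $t=1$ of the formula $\dd_t = \sum\{d(b_n,a_n)m_{b_n,a_n} : b_n\leq t\}$, since every gap lies below $1$), so that $m_{1,0} = \dd_1 = \sum_{n=1}^\infty d(b_n,a_n)\,m_{b_n,a_n}$. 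Applying $\phi^{-1}$ and using $\phi^{-1}(m_{b_n,a_n}) = e_n$ gives $S_\sigma(e_1) = -3\sum_{n=1}^\infty d(b_n,a_n)\,e_n$, which lies in $\ell_1$ because $\sum_n d(b_n,a_n) = \sum_{i\geq1} 2^{i-1}\cdot 3^{-i} < \infty$. The assertion that this vector is a fixed point of $S_\sigma$ then follows either from the displayed computation already given in the text (expanding $S_\sigma$ on the series termwise and regrouping the index $[n/2]$, noting each value is hit exactly twice, which produces the factor $2$ that, combined with the $-1$ coming from the $e_1$-term, returns the original vector), or — more conceptually — from the fact that $m_{1,0}$ is a fixed point of $T_\sigma$ since $0$ and $1$ are fixed points of $\sigma$, and conjugation by $\phi$ preserves fixed points.

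Finally, the matrix description is just a transcription: writing $S_\sigma(e_n)$ in coordinates, column $1$ of $M_\sigma$ is $-3(d_1,d_2,d_3,\dots)^\top$, and for $n\geq 2$ column $n$ has the single nonzero entry $3$ in row $[n/2]$. Grouping columns $n=2,3$ (both mapping to row $1$), then $n=4,5$ (row $2$), then $n=6,7$ (row $3$), etc., produces exactly the banded pattern displayed in the statement. I do not anticipate a genuine obstacle here: the only slightly delicate point is making sure the termwise manipulation of the infinite series in the fixed-point check is justified, which is immediate from absolute convergence in $\ell_1$ and continuity of $S_\sigma$; everything else is bookkeeping with the enumeration of the Cantor gaps already fixed in Figure \ref{fig:Cantor-order}.
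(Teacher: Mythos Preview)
Your proposal is correct and follows essentially the same route as the paper: the relevant computations of $T_\sigma(m_{b_n,a_n})$ for $n\geq 2$ and $n=1$ are exactly those carried out in the discussion immediately preceding the proposition, and both the direct series verification and the conceptual ``$0,1$ are fixed points of $\sigma$'' argument for the fixed-point claim already appear there. Nothing needs to be added.
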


    As the backward shift $\sigma:\cantor \to \cantor$ is a Lipschitz function which is mixing and Devaney Chaotic, we already know that both $T_\sigma$ and its conjugated operator $S_\sigma$ also inherit these properties, which was already stated in \cite{ACP2021}. Below, we show--as a straightforward application of the Lipschitz d-Hypercyclicity Criterion (Theorem \ref{thm:lip-d-hcc})--that any finite collection of different powers of $T_\sigma\in \mathcal{L}(\free{\cantor}) $ (and so, $S_\sigma \in \mathcal{L}(\ell_1)$) is disjoint mixing.

\begin{proposition} \label{prop:backward-cantor}
    Let $\sigma: \cantor \to \cantor$ be the backward shift map in the usual ternary Cantor set. Therefore $T_{\sigma^{m_1}},...,T_{\sigma^{m_N}}$ are disjoint mixing operators in $\free{\cantor}$ whenever $m_i \neq m_j$ for every $i\neq j$, with $i,j\in \{1,...,N\}$.
\end{proposition}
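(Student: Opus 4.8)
The plan is to apply the Lipschitz d-Hypercyclicity Criterion (Theorem~\ref{thm:lip-d-hcc}) to the tuple $\sigma^{m_1},\dots,\sigma^{m_N}$ with respect to the full sequence $n_k=k$. Since the backward shift $\sigma$ on $\cantor$ satisfies $\sigma(b_n)=b_{[n/2]}$ and $\sigma(a_n)=a_{[n/2]}$ for $n\ge 2$ (and collapses $b_1,a_1$ towards the fixed points $1,0$ in finitely many steps), we have $d(\sigma^p(t),0)\to 0$ as $p\to\infty$ uniformly on... no, not uniformly, but pointwise: for a fixed $t\in\cantor$ with ternary expansion $s(t)$, after sufficiently many applications of $\sigma$ the expansion starts with arbitrarily many zeros, i.e. $\sigma^p(t)\to 0$. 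This immediately gives condition~$(i)$ with $M_0=\cantor$, since $f_i^{n_k}(x_0)=\sigma^{m_i k}(x_0)\to 0$ for each $i$ (here $m_i k\to\infty$ because $m_i\ge 1$).

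For the "right inverse" maps I would use a shift in the other direction. Assume without loss of generality $m_1<m_2<\dots<m_N$. The natural candidate for $g_{i,k}$ on $M_i:=\cantor$ is the $i$-th iterate of a forward shift that prepends zeros; concretely, let $\tau\colon\cantor\to\cantor$ prepend a $0$ to the ternary expansion (so $\sigma\circ\tau=\mathrm{Id}$), and set $g_{i,k}:=\tau^{r_{i,k}}$ for a suitably chosen exponent $r_{i,k}$ depending on $i,k$ and on the gaps between the $m_j$. The key point: prepending many zeros pushes a point into the part of $\cantor$ near $0$, so $d(g_{i,k}(x_i),0)\to 0$, giving~$(ii)$; and because $\sigma$ is a genuine left inverse of $\tau$, one has $\sigma^{m_i k}\circ\tau^{r_{j,k}}$ acting on an expansion by first prepending $r_{j,k}$ zeros and then deleting $m_i k$ digits. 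Choosing $r_{j,k}:=m_j k$ makes $\sigma^{m_i k}\circ\tau^{m_j k}=\sigma^{(m_i-m_j)k}$ when $m_i\le m_j$ and $=\tau^{(m_j-m_i)k}$ when $m_i\ge m_j$. Thus for $j=i$ we get the identity exactly (so $(f_i^{n_k}\circ g_{i,k})(x_i)=x_i$, matching $\Delta_{i,i}x_i=x_i$), while for $j\ne i$ we get either $\sigma^{|m_i-m_j|k}(x_i)\to 0$ or $\tau^{|m_i-m_j|k}(x_i)\to 0$, and in both cases the result tends to $0=\Delta_{j,i}x_i$ since $|m_i-m_j|\ge 1$. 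This verifies~$(iii)$.

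One technical subtlety deserves care: $\sigma$ does not literally delete the first digit at the endpoints $\frac13,\frac23$ of the first gap (there $\sigma(\tfrac13)=1$, $\sigma(\tfrac23)=0$), so the clean identity $\sigma\circ\tau=\mathrm{Id}$ and the digit-shift description hold only away from this finite exceptional behaviour, or rather hold after reinterpreting $1=0.2222\ldots$ ternarily. Since $M_i$ only needs to be \emph{dense} in $\cantor$, I would simply take $M_i:=\cantor\setminus\{0,1\}$ (or the dense set of points whose expansion is not eventually constant), on which the ternary-shift calculus is exact, and the three limit conditions then follow from the digit-counting argument above. With all three conditions of Theorem~\ref{thm:lip-d-hcc} verified, that theorem yields that $\{T_{\sigma^{m_1}}^{k}\}_k,\dots,\{T_{\sigma^{m_N}}^{k}\}_k$ are disjoint mixing, which is exactly the claim (noting $T_{\sigma^{m_i}}=T_\sigma^{m_i}$).

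The main obstacle I anticipate is purely bookkeeping rather than conceptual: getting the exponents $r_{j,k}$ right so that condition~$(iii)$ produces \emph{exactly} the identity on the diagonal $j=i$ and a high iterate of $\sigma$ or $\tau$ off-diagonal, uniformly enough that the estimate in the proof of Theorem~\ref{thm:lip-d-hcc} (a finite sum $\sum_n|\alpha_n|\,d(\cdot,\cdot)$) goes to $0$. The choice $r_{j,k}=m_jk$ above handles this cleanly precisely because the $m_i$ are pairwise distinct, which is where that hypothesis is used; with any repetition $m_i=m_j$ the off-diagonal term would be the identity instead of tending to $0$, and the criterion — indeed disjointness itself — would fail.
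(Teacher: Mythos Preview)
Your overall strategy coincides with the paper's: apply the Lipschitz d-Hypercyclicity Criterion with the forward shift $\tau$ (the paper's $w$) as right inverse and $g_{j,k}=\tau^{m_j k}$. However, there is a genuine error in your verification of condition~$(i)$ (and, by the same token, of the off-diagonal case $m_i>m_j$ in~$(iii)$). You assert that for every $t\in\cantor$ one has $\sigma^p(t)\to 0$, because ``after sufficiently many applications of $\sigma$ the expansion starts with arbitrarily many zeros.'' This is false: $\sigma$ \emph{deletes} the leading digit, it does not prepend zeros. For $t=1=0.222\ldots$ we have $\sigma^p(1)=1$ for all $p$; for $t=0.2020\ldots$ the iterates $\sigma^p(t)$ oscillate and do not approach $0$. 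So condition~$(i)$ fails on $M_0=\cantor$, and your later restriction to $\cantor\setminus\{0,1\}$ or to ``points whose expansion is not eventually constant'' does not help---in fact it singles out exactly the points for which $\sigma^p(t)$ fails to stabilise.

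The correct choice, and the one the paper makes, is the opposite: take $M_0=\cdots=M_N$ to be the dense set of points with \emph{finite} ternary expansion (eventually all zeros). For such $t$ one has $\sigma^p(t)=0$ once $p$ exceeds the length of the expansion, so~$(i)$ holds trivially, and in~$(iii)$ the case $m_i>m_j$ gives $\sigma^{(m_i-m_j)k}(t)=0$ eventually. With this single change your argument goes through and is essentially identical to the paper's proof. (Incidentally, your worry about $\sigma\circ\tau=\mathrm{Id}$ failing is unfounded: since $\tau(\cantor)\subset[0,1/3]$ and $\sigma$ acts as multiplication by $3$ there, the identity $\sigma\circ\tau=\mathrm{Id}$ holds on all of $\cantor$; and your two displayed cases for $\sigma^{m_ik}\circ\tau^{m_jk}$ are swapped, though your subsequent use of $|m_i-m_j|$ absorbs that slip.)
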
 

\begin{proof}
    It is enough to see that the family of maps $f_i:= \sigma^{m_i}$ for $i=1,...,N$ satisfy the Lipschitz d-Hypercyclicity Criterion (Theorem \ref{thm:lip-d-hcc}) for any strictly increasing sequence $\{n_{k}\}_{k=1}^\infty \subset \NN$. Take as $M_0=M_1= \cdots = M_N$ the subset of numbers of the Cantor set $\cantor$ having finite ternary representation. Now, for the ``inverse maps'', for every $i=1,...,N$ and $k\in \NN$ we will consider the mappings $g_{i,k}\colon \cantor \to \cantor$ defined as
    \[g_{i,k}= (w^{m_i})^k,\]
    where $w:\cantor \to \cantor$ is the usual ``forward shift'' on the Cantor set. Thus, for every element $x\in \cantor$ with finite ternary representation, for $i=1,...,N$, we have that 
  \begin{align*}
  (i)& \   f_i^{n_k}(x) \xrightarrow{k\to \infty} 0\\
  (ii)&\ g_{i,k}(x)\xrightarrow{k\to \infty}  0.
  \end{align*}
  Now, notice that for $m,l\in \NN$
  \begin{equation*}
    \sigma^m\circ w^l= 
      \begin{cases}
        \sigma^{m-l} & \text{ if } m>l;\\
      \id & \text{ if } m=l;\\
      w^{l-m} & \text{ if } l>m,\\
      \end{cases}
  \end{equation*}
 so it follows that for every $x\in \cantor$ with finite ternary representation, and for $i,j\in \{1,...,N\}$, we have that
  \[ (iii) \ (f_{i}^{n_k} \circ g_{j,k}) (x) \xrightarrow{k\to \infty } \Delta_{i,j} x\]
  (where, recall, $\Delta_{i,j} x= x$ when $i=j$ and $\Delta_{i,j}x=0$ when $i\neq j$).

  Then, $T_{\sigma^{m_1}},T_{\sigma^{m_2}},...,T_{\sigma^{m_N}}$ are disjoint mixing operators in $\free{\cantor}$, as we wanted to prove.
\end{proof}

\begin{corollary}
    Any finite collection of different powers of the aforementioned operator $S_\sigma \in \mathcal{L}(\ell_1)$ is disjoint mixing. 
\end{corollary}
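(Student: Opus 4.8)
The plan is to deduce the corollary directly from Proposition \ref{prop:backward-cantor} together with the observation that $S_\sigma$ is conjugated to $T_\sigma$ via the linear onto isometry $\phi\colon \ell_1 \to \free{\cantor}$. Concretely, I would first note that conjugation by a fixed invertible operator preserves all disjoint dynamical properties: if $\phi$ is an isomorphism and $T_i = \phi \circ S_i \circ \phi^{-1}$ for $i = 1, \dots, N$, then for any non-empty open sets $V_0, V_1, \dots, V_N \subset \ell_1$ one has $\phi(V_0), \phi(V_1), \dots, \phi(V_N)$ non-empty open in $\free{\cantor}$ (as $\phi$ is a homeomorphism), and a straightforward check shows
\[
\dret_{S}(V_0, V_1 \times \dots \times V_N) = \dret_{T}\bigl(\phi(V_0), \phi(V_1) \times \dots \times \phi(V_N)\bigr),
\]
where $S := S_1 \times \dots \times S_N$ and $T := T_1 \times \dots \times T_N$. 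Hence $S_1, \dots, S_N$ are disjoint $\mathcal{A}$-transitive (for any Furstenberg family $\mathcal{A}$) if and only if $T_1, \dots, T_N$ are; in particular the disjoint mixing property is transferred.

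Next I would apply this to the specific tuple at hand. Given distinct positive integers $m_1, \dots, m_N$, the relevant operators are the powers $S_\sigma^{m_1}, \dots, S_\sigma^{m_N}$ of the single operator $S_\sigma$. Since $S_\sigma = \phi^{-1} \circ T_\sigma \circ \phi$ by definition, we get $S_\sigma^{m_i} = \phi^{-1} \circ T_\sigma^{m_i} \circ \phi = \phi^{-1} \circ T_{\sigma^{m_i}} \circ \phi$, using the functoriality identity $T_{f^n} = T_f^n$ (which appears in the proof of Theorem \ref{thm:lip-d-hcc}). So $S_\sigma^{m_1}, \dots, S_\sigma^{m_N}$ is exactly the $\phi$-conjugate of the tuple $T_{\sigma^{m_1}}, \dots, T_{\sigma^{m_N}}$, which is disjoint mixing by Proposition \ref{prop:backward-cantor}. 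By the conjugacy invariance noted above, $S_\sigma^{m_1}, \dots, S_\sigma^{m_N}$ is disjoint mixing in $\mathcal{L}(\ell_1)$, and since the $m_i$ were an arbitrary choice of distinct positive integers, every finite collection of distinct powers of $S_\sigma$ is disjoint mixing.

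There is essentially no obstacle here — the statement is a routine corollary, and the only thing worth being careful about is making explicit that conjugation by a topological isomorphism preserves disjoint return sets (hence disjoint $\mathcal{A}$-transitivity for every $\mathcal{A}$, in particular disjoint mixing, which is the case $\mathcal{A} = \{A \subset \NN : A \text{ cofinite}\}$), and that $\phi$ being an isometric isomorphism is in particular such an isomorphism. I would therefore keep the proof to a couple of sentences: record the conjugacy invariance of disjoint mixing as a one-line remark, observe $S_\sigma^{m_i} = \phi^{-1} T_{\sigma^{m_i}} \phi$, and invoke Proposition \ref{prop:backward-cantor}.

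\begin{proof}
Disjoint mixing is preserved under conjugation by a topological isomorphism: if $\phi$ is an onto isomorphism and $S_i = \phi^{-1} \circ T_i \circ \phi$, then $\phi$ maps non-empty open sets to non-empty open sets and $\dret_S(V_0, V_1 \times \dots \times V_N) = \dret_T(\phi(V_0), \phi(V_1) \times \dots \times \phi(V_N))$ for all such sets, so the return sets of the two tuples coincide up to relabelling the open sets. Now let $m_1, \dots, m_N \in \NN$ be pairwise distinct. Since $S_\sigma = \phi^{-1} \circ T_\sigma \circ \phi$ and $T_\sigma^{m_i} = T_{\sigma^{m_i}}$, we have $S_\sigma^{m_i} = \phi^{-1} \circ T_{\sigma^{m_i}} \circ \phi$ for each $i$, so the tuple $S_\sigma^{m_1}, \dots, S_\sigma^{m_N}$ is the $\phi$-conjugate of $T_{\sigma^{m_1}}, \dots, T_{\sigma^{m_N}}$. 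By Proposition \ref{prop:backward-cantor} the latter is disjoint mixing in $\free{\cantor}$, hence so is the former in $\ell_1$.
\end{proof}
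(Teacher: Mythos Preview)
Your proof is correct and is exactly the intended argument: the paper states the corollary without proof because it follows immediately from Proposition \ref{prop:backward-cantor} together with the conjugacy $S_\sigma = \phi^{-1}\circ T_\sigma\circ\phi$, which is precisely what you have spelled out. Your care in noting that conjugation by a topological isomorphism preserves disjoint return sets (and hence disjoint mixing) is the only thing there is to check, and you have done so.
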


%%%%%%%%%%%%%%%%%%%%%%%%%%%%%%%%%%
% For $m,l\in \NN$, it should be easy to check that $T_{\sigma^m}$ and $T_{\sigma^l}$ are d-transitive (or even disjoint mixing) when $m \neq l$. Indeed, it can be done seeing that they satisfy the Lipschitz d-Hypercyclicity Criterion (Theorem \ref{thm:lip-d-hcc}), by taking $M_0=M_1=M_2$ the subset of $\cantor$ of numbers having finite ternary representation, $f_1:=\sigma^m$ and $f_2:=\sigma^l$, and taking as its ``inverse maps'' $g_{1,k}:= (w^m)^k$ and $g_{2,k}:= (w^l)^k$, where $w:\cantor \to \cantor$ is the usual forward shift on the Cantor set. Thus, for every element $x\in \cantor$ with finite ternary representation, for $i=1,2$, we have that 
%   \begin{align*}
%   (i)& \   f_i^{n_k}(x) \xrightarrow{k\to \infty} 0\\
%   (ii)&\ g_{i,k}(x)\xrightarrow{k\to \infty}  0.
%   \end{align*}
%   Now, notice that 
%   \begin{equation*}
%     \sigma^m\circ w^l= 
%       \begin{cases}
%         \sigma^{m-l} & \text{ if } m>l;\\
%       \id & \text{ if } m=l;\\
%       w^{l-m} & \text{ if } l>m,\\
%       \end{cases}
%   \end{equation*}

%   so it follows that for every $x\in \cantor$ with finite ternary representation, and for $i,j\in \{1,2\}$, we have that
%   \[ (iii) \ (f_{i}^{k} \circ g_{j,k}) (x) \xrightarrow{k\to \infty } \Delta_{i,j} x\]
%   (where, recall, $\Delta_{i,j} x= x$ when $i=j$ and $\Delta_{i,j}x=0$ when $i\neq j$).

\begin{remark}
    \rm
Although we preferred to illustrate the usage of Theorem \ref{thm:lip-d-hcc}, an alternative--and equally easy--way of proving Proposition \ref{prop:backward-cantor} would be showing that any finite collection of powers of the backward shift map acting on $\cantor$ is indeed disjoint mixing, and then concluding that this property is inherited to the corresponding family of operators in $\free{\cantor}$ by Remark \ref{rem:mixing} and Corollary \ref{cor:lip-d-hyp}. 

\eor
\end{remark}

\subsection{The anti-symmetric tent map}\label{subsec:anti-sym-tent-map}

First, it might be useful to consider an auxiliary family of functions. Let $Z_p\colon [0,1] \to [0,1]$ to be the ``$p$-th zig-zag map'', defined by 
 \begin{equation*}
   Z_p(x)= 
      \begin{cases}
       px-2k  & \text{ if } x\in [\frac{2k}{p},\frac{2k+1}{p}], \ 0\leq k \leq \frac{p-1}{2};\\
      -px+2k+2 & \text{ if }  x\in [\frac{2k+1}{p},\frac{2k+2}{p}], \ 0\leq k \leq \frac{p-2}{2}.\\
      \end{cases}
  \end{equation*}

Notice that the function $Z_p$ has $p$ lines with a slope alternating between $p$ and $-p$, so $Z_p$ is $p$-Lipschitz. In particular, $Z_2$ is the usual tent map on the interval $[0,1]$.

 \begin{figure}[htb!]
\begin{center}
\includegraphics[width=5in]{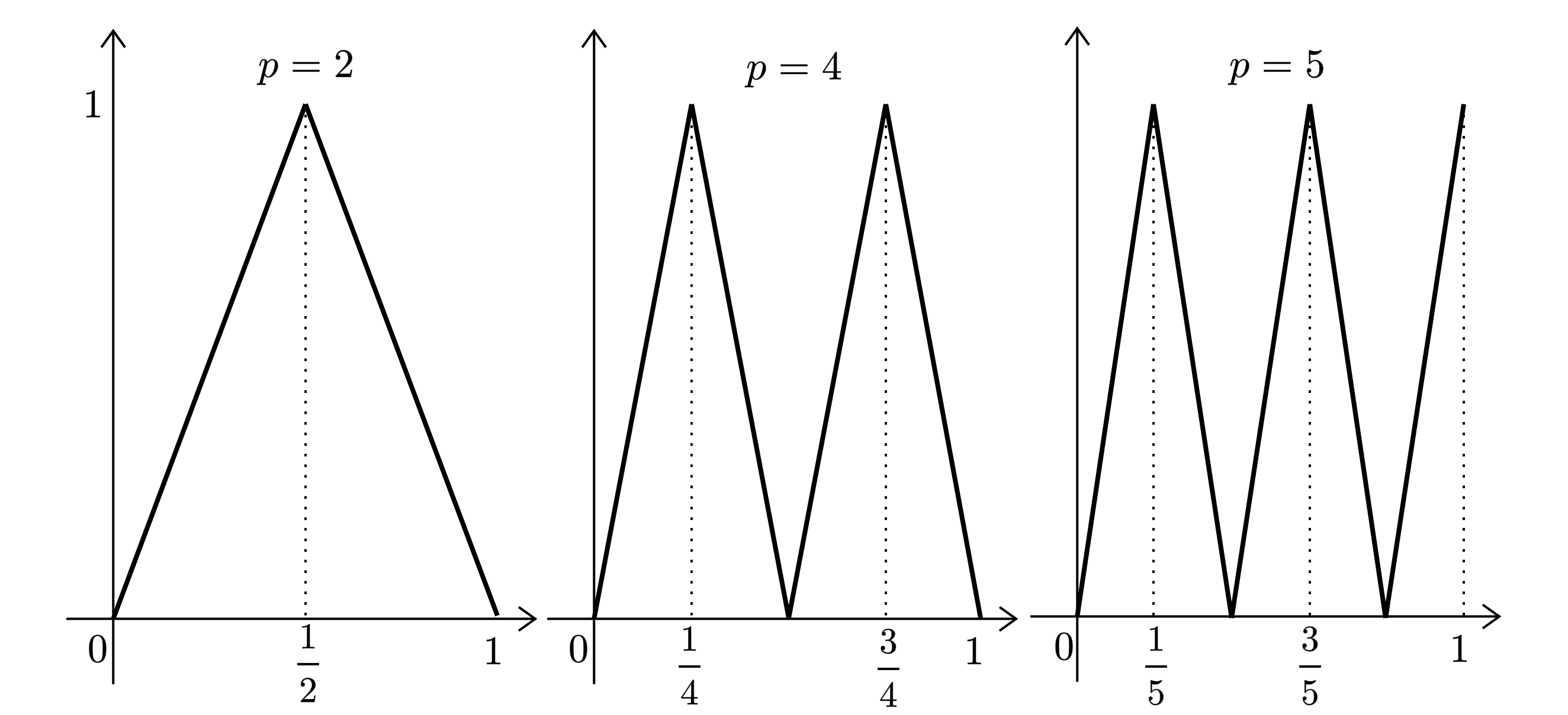}
\caption{Representation of the functions $Z_p$.}
\label{fig:zigzag}
\end{center}
\end{figure}

Now, let $M=[-1,1]$, where $0$ is the distinguished point. Consider $f\colon M \to M$ to be the \textit{anti-symmetric tent map}, defined as

\begin{equation*}
   f(x)= 
      \begin{cases}
       Z_2(x)  & \text{ if } x\in [0,1];\\
      -Z_2(-x) & \text{ if }  x \in [-1,0].\\
      \end{cases}
  \end{equation*}

  Notice that, for every $j\in \NN$,
\begin{equation*}
   f^j(x)= 
      \begin{cases}
       Z_2^j(x)= Z_{2^j}(x)  & \text{ if } x\in [0,1];\\
      -Z_2^j(x)=-Z_{2^j}(-x) & \text{ if }  x \in [-1,0].\\
      \end{cases}
  \end{equation*}

Thus, $f^m$ is a $2^m$-Lipschitz mapping on $M$ with $0$ being a fixed point. As $f([0,1])=[0,1]$ and $f([-1,0])=[-1,0]$, it is known that $f$ is not transitive---although $T_f$ is mixing and chaotic, as pointed out in \cite{ACP2021}. Below, we prove that despite no power of $f$ is even transitive, any finite collection of different Lipschitz-free operators of the form $T_{f^{m}}$ where $m\in \NN$ is a family of disjoint mixing operators in $\free{M}$. 
%We would like to prove that if $m\neq l$, then $T_{f^m}$ and $T_{f^l}$ are d-transitive (or even disjoint mixing). To see this, it is enough to check that $f^m$ and $f^l$ satisfy the Lipschitz d-Hypercyclicity Criterion.

\begin{proposition}\label{prop:anti-sym-tent-map}
    Let $M=[-1,1]$ and $f\colon M\to M$ the anti-symmetric tent map in $M$. Therefore, $T_{f^{m_1}},...,T_{f^{m_N}}$ are disjoint mixing operators in $\free{M}$ whenever $m_i \neq m_j$ for every $i\neq j$ with $i,j\in \{1,...,N\}$.
\end{proposition}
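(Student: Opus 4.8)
The plan is to transport the argument of Proposition \ref{prop:backward-cantor} to the interval: I would verify the Lipschitz d-Hypercyclicity Criterion (Theorem \ref{thm:lip-d-hcc}), with respect to the full sequence $n_k=k$, for the tuple $f_i:=f^{m_i}$, $i=1,\dots,N$, and then quote that theorem. For the dense sets I would take $M_0=M_1=\dots=M_N:=D$, the set of points of $M=[-1,1]$ with finite binary expansion, which is dense in $M$. The key observation is that $f$ maps $D$ into itself and drives every such point to the fixed point $0$ in finitely many steps: writing a point of $D$ as $p/2^n$, each of the (anti-symmetric) tent-map branches halves the power of $2$ in the denominator, so after at most $n$ steps we land on an integer of $[-1,1]$, i.e.\ on $\{-1,0,1\}$, and one further step reaches $0$; hence $f^{j}(x)=0$ for $j\ge n+1$. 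Consequently $f_i^{k}(x_0)=f^{m_ik}(x_0)\to 0$ for every $x_0\in M_0$, which is condition $(i)$.

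For the ``inverse'' maps I would use the single contracting inverse branch $w\colon M\to M$, $w(x):=x/2$ (playing the role of the forward shift in Proposition \ref{prop:backward-cantor}). A direct check on the two halves of $M$ gives $f\circ w=\id_M$: for $x\in[0,1]$ we have $w(x)\in[0,\tfrac12]$, where $f$ acts as $y\mapsto 2y$, so $f(w(x))=x$; for $x\in[-1,0]$ we have $w(x)\in[-\tfrac12,0]$, where $f(y)=-Z_2(-y)=2y$, so again $f(w(x))=x$. Setting $g_{i,k}:=(w^{m_i})^{k}=w^{m_ik}$, condition $(ii)$ is immediate, since $g_{i,k}(x)=x/2^{m_ik}\to 0$. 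For $(iii)$, the identity $f\circ w=\id$ gives $f^{m}\circ w^{l}=f^{m-l}$ if $m>l$, $\id$ if $m=l$, and $w^{l-m}$ if $m<l$; applying this to $f_i^{k}\circ g_{j,k}=f^{m_ik}\circ w^{m_jk}$: when $i=j$ it is the identity, so it fixes every $x$ (matching $\Delta_{j,i}x=x$); when $i\ne j$, where $m_i\ne m_j$ by hypothesis, it equals $f^{(m_i-m_j)k}$ if $m_i>m_j$, which sends any $x\in D$ to $0$ for $k$ large by the previous paragraph, or $w^{(m_j-m_i)k}$ if $m_i<m_j$, which sends $x$ to $x/2^{(m_j-m_i)k}\to 0$; either way the limit is $\Delta_{j,i}x=0$. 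Hence Theorem \ref{thm:lip-d-hcc} applies, and $T_{f^{m_1}},\dots,T_{f^{m_N}}$ satisfy the Disjoint Hypercyclicity Criterion, so they are disjoint mixing operators on $\free{M}$.

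I do not expect a genuine obstacle here; this is essentially the Cantor computation carried over to the interval. The two points that need a little care are the verification that the inverse branch $w(x)=x/2$ composes correctly with the piecewise, anti-symmetric definition of $f$ (so that one splits into the cases $x\ge 0$ and $x\le 0$), and the bookkeeping showing that pairwise distinctness of the exponents $m_1,\dots,m_N$ is exactly what makes $f^{m_ik}\circ w^{m_jk}$ collapse to a strictly positive power of $f$ or of $w$ whenever $i\ne j$ --- which is what forces the limit $0$ required in $(iii)$.
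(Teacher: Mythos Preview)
Your proposal is correct and follows essentially the same route as the paper's own proof: the paper also verifies the Lipschitz d-Hypercyclicity Criterion with $M_0=\dots=M_N$ the dyadic rationals in $[-1,1]$, the right inverse $g(x)=x/2$, the identity $f\circ g=\id$, and the same three-case split of $f^{m_i n_k}\circ g^{m_j n_k}$ according to the sign of $m_i-m_j$. The only cosmetic difference is that the paper states the criterion ``for any strictly increasing sequence $(n_k)$'' rather than just $n_k=k$; either way one obtains that the operators themselves are disjoint mixing.
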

\begin{proof}
Once again, it is enough to see that the functions $f^{m_1},...,f^{m_N}$ satisfy the Lipschitz d-Hypercyclicity Criterion for any strictly increasing sequence $\{n_k\}_{k=1}^\infty$.
First, take as $M_0=M_1= \cdots =M_N$ the subset of dyadic rational numbers in $M$, i.e., $x_0\in M_0$ if $x_0 \in M$ is of the form $x_0= \pm \frac{r}{2^q}$. Notice that every element of $M_0$ is eventually $0$ after a finite number of iterations, as if $x_0=\pm \frac{r}{2^q}$, then $f^q(x_0)=\pm 1$, hence $f^m(x_0)=0$ for every $m> q$.

     Thus, for every $x_0 \in M_0$ and $i=1,...,N$
\begin{align*}
 (i) \   (f^{m_i})^{n_k}(x_0) & \xrightarrow{k\to \infty } 0.
\end{align*}

Now, for the sequence of ``inverse mappings'', for $i=1,...,N$ and for every $k\in \NN$, consider
\[g_{i,k}:= (g^{m_i})^{n_k},\]
 where $g\colon M \to M $, $g(x):= \frac{x}{2}$. Notice that, as $g^m(x)=\frac{x}{2^m}$, then for every $x\in M$ and $i=1,...,N$
\begin{align*}
  (ii) \   g_{i,k}(x) & \xrightarrow{k\to \infty } 0.
\end{align*}

Finally, for the step $(iii)$, divide the argument in 3 subcases:

-First, notice that $f \circ g= \id$, so it is clear that for every $i=1,...,N$, we have that $(f^{m_i})^{n_k}\circ g_{i,k}= \id$.

-For the second subcase, if $m<l$ (then, the ``compression factor'' of $g^l$ is greater than the ``expansive factor'' of $f^m$), we have that 
\[(f^m \circ g^l)([-1,1])\subset [-\frac{1}{2^{l-m}},\frac{1}{2^{l-m}}].\]
%\[(f^m \circ g^l)^{n_k}(M)\subset [-\frac{1}{2^{m n_k}},\frac{1}{2^{mn_k}}],\]
Thus, if $i\neq j$ and $m_i < m_j$, we have
\[((f^{m_i})^{n_k} \circ g_{j,k})(M)\subset [-\frac{1}{2^{(m_j-m_i) n_k}},\frac{1}{2^{(m_j-m_i)n_k}}],\]
so for every $x\in M$,
\[((f^{m_i})^{n_k} \circ g_{j,k})(x) \xrightarrow{k\to \infty} 0. \]

-For the third subcase, if $m>l$, then $f^m\circ g^l=f^{m-l}$, by taking any dyadic number $x_0\in M_0$, it follows that if $i\neq j$ and $m_i > m_j$, then
\[((f^{m_i})^{n_k} \circ g_{j,k})(x_0)= (f^{m_i-m_j})^{n_k}(x_0),\]
which is eventually $0$ for a big enough $k\in \NN$.

 Indeed, for $m,l\in \NN$
  \begin{equation*}
    f^m\circ g^l= 
      \begin{cases}
        f^{m-l} & \text{ if } m>l;\\
      \id & \text{ if } m=l;\\
      g^{l-m} & \text{ if } l>m,\\
      \end{cases}
  \end{equation*}

As a conclusion of the discussion above, we reach that, for $i,j\in \{1,...,N\}$ and $x_0 \in M_0$
  \[ (iii) \ (f_{i}^{n_k} \circ g_{j,k}) (x_0) \xrightarrow{k\to \infty } \Delta_{i,j} x_0\]
  (where, recall, $\Delta_{i,j} x= x$ when $i=j$ and $\Delta_{i,j}x=0$ when $i\neq j$). So, applying Theorem \ref{thm:lip-d-hcc}, the proof is over.

\end{proof}

\begin{remark}
    Notice that in both examples, either with the backward shift on the ternary Cantor set or the anti-symmetric tent map, its associated Lipschitz-free operator $T_f$ has the property that any finite collection of its different powers is disjoint mixing (in particular, disjoint transitive). This implies that, in both cases, $T_f$ is $\mathcal{AP}$-hypercyclic (see \cite{CaMu22}).
    \eor
\end{remark}

 \section*{Acknowledgements}
We would like to thank R. J. Aliaga for interesting discussions around the example in Subsection \ref{subsec:shift-cantor}. The authors were supported by  Generalitat Valenciana (through Project PROMETEU/2021/070) and by MICIU/AEI/
10.13039/501100011033 and by ERDF/EU (through Projects PID2019-105011GB-I00 and PID2022-139449NB-I00). The first author was also supported by Generalitat Valenciana (through the predoctoral fellowship CIACIF/2021/378) and by MICIU/AEI/10.13039/
501100011033 and by ERDF/EU (through Project PID2021-122126NB-C33).

\printbibliography
\end{document}